\theoremstyle{plain}
\newtheorem{thm}{Theorem}[section]
\newtheorem{lem}[thm]{Lemma}
\newtheorem{cor}[thm]{Corollary}
\theoremstyle{definition}
\newtheorem{rem}[thm]{Remark}
\numberwithin{equation}{section}
\newcommand{\sgn}{\mathop{\mathrm{sgn}}}
\def\loc{\text{\rm loc}}
\def\supp{\mathop{\rm supp}\nolimits}
\newcommand{\beqn}{\begin{eqnarray}}
\newcommand{\eeqn}{\end{eqnarray}}
\newcommand{\be}{\begin{equation}}
\newcommand{\ee}{\end{equation}}
\newcommand{\bal}{\begin{align}}
\newcommand{\eal}{\end{align}}
 \def\Mleb{\mathfrak{M}}
\newcommand{\ben}{\begin{enumerate}}
\newcommand{\een}{\end{enumerate}}
\def\wcnn{\mathop{\phantom{W}}\limits^{\circ\circ}\mskip-23muW}
\def\wcn{\mathop{\phantom{W}}\limits^{\circ}\mskip-23muW}
\begin{document}

\begin{center}{\bf ON WEAK ASSOCIATED REFLEXIVITY OF WEIGHTED SOBOLEV SPACES\\ OF THE FIRST ORDER ON REAL LINE}\end{center}

\smallskip\begin{center}
{\bf V.D. Stepanov$^{1,3}$\footnote{Corresponding author: stepanov@mi-ras.ru} and E.P. Ushakova$^{2,3}$}\end{center}

\smallskip
\noindent$^1$\textit{\small Computing Center of Far Eastern Branch of Russian Academy of Sciences, 65 Kim Yu Chena str., Khabarovsk 680000, Russia}

\noindent$^2$\textit{\small V.A. Trapeznikov Institute of Control Sciences of Russian Academy of Sciences, 65 Profsoyuznaya str., Moscow 117997, Russia}

\noindent$^3$\textit{\small Steklov Mathematical Institute of Russian Academy of Sciences, 8 Gubkina str., 119991 Moscow,  Russia}

\medskip

\noindent\textit{Key words}: Sobolev space, dual space, associate space, reflexivity.
\\ \textit{MSC (2010)}: 46E30, 46E35

\vskip 0.2cm

{{\bf Abstract.}  
We study associate and double associate spaces of two-weighted Sobolev spaces of the first order on real half-line and we show that unlike the notion of duality the associativity is divided into two cases which we call "strong" and "weak" ones with the division of the second associativity into four cases. On the way we prove that the Sobolev space of compactly supported functions possess weak associated reflexivity and the double weak-strong associate space is vacuous. The case of power weights was recently characterized by reduction to Ces\`{a}ro or Copson type spaces \cite{S1}}.

\smallskip
\section{Introduction}

Let $1<p<\infty, m\in\mathbb{N}$ and let $W^{p,m}, W_0^{p,m}$ and $H^{p,m}$ be classical Sobolev spaces (see \cite[Chapter 3]{A}), where $W_0^{p,m}$ and $H^{p,m}$ are completions of $C^\infty_0$ and $C^m$, respectively, with regard to the norm 
\begin{equation*}
\|f\|_{m,p}:=\left(\sum_{0\leq |\alpha|\leq m}\|D^\alpha f\|_p^p\right)^{\frac{1}{p}}.
\end{equation*}
Moreover, $W^{p,m}=H^{p,m}$ \cite[Theorem 3.16]{A}. If $N=\sum_{0\leq |\alpha|\leq m} 1$ then the dual
of $W^{p,m}$ is  a closed subspace of vector Lebesgue space $L^{p'}_N,$ where $p'=\frac{p}{p-1}.$ It implies reflexivity of $W^{p,m}$ as well as $W^{p,m}_0$ on the base of general criterion of reflexivity of Banach spaces \cite[Theorem 1.17]{A} and weak compactness of a ball in $W^{p,m}$ which follows from \cite[\textsection\, 4, Theorem 2]{S}.  General form of arbitrary linear bounded functional $L\in (W^{p,m})^\prime$ is given by \cite[Theorem 3.8]{A} with implicit formula for the norm $\|L\|.$
Alternatively, $W^{-m,p'}=(W^{p,m}_0)^\prime$ is constructed as completion of the set of functionals $V:=\{L_v; v\in L^{p'}\}\subset (W^{p,m}_0)^\prime,$
$L_v(u):=\langle u,v\rangle:=\int u(x)v(x)dx$ with respect to the norm
\begin{equation}\label{nrm}
\|v\|_{-m,p'}:=\sup_{0\not=u\in W^{p,m}_0}\frac{|\langle u,v\rangle|}{\|u\|_{m,p}}.
\end{equation}
Similar results are known for the Sobolev-Orlicz spaces (see \cite{K} and literature therein).

Generally, elements of $(W^{p,m})^\prime, (W^{p,m}_0)^\prime$ are distributions of positive order.
We learn out the case when duality is replaced by associativity and limit ourselves to the study of the two-weight Sobolev spaces of the first order on the real line. The motivation to characterize associative spaces is that it gives the principle of duality which allows to reduce a problem of the boundedness of a linear operator, say from Sobolev space to Lebesgue space, to a more manageable problem for its conjugate operator (see, for examples \cite{Oin2, Oin3, Oin4, Oin5, Oin6}, \cite{S3}).  

Now we provide basic definitions. Let $I:=(a,b)\subseteq\mathbb{R}$ be an open interval of the real axis and let $\Mleb(I)$ be the set of all Lebesgue measurable functions on $I$ . For $1\le p<\infty$ we denote $L^p(I)\subset \Mleb(I)$ 
the usual Lebesgue space with the norm $\|f\|_{L^p(I)}:=\left(\int_I|f|^p\right)^{1/p}.$ 
Let
$
{\mathscr V}_p(I):=\bigl\{v\in L^p_\loc(I): v\ge 0,\|v\|_{L^1(I)}\not=0 \bigr\}
$
be the set of weight functions (weights) and $v_0,v_1\in {\mathscr V}_1(I)$.
Denote $W^1_{1,\loc}(I)$  the space of all functions $u\in L^1_\loc(I)$, which distributional derivatives $Du$ belong to $L^1_\loc(I)$. We study the weighted Sobolev space
\begin{equation*}
W^1_p(I):=\bigl\{u\in W^1_{1,\loc}(I):\|u\|_{W^1_p(I)}<\infty\bigr\},
\end{equation*}
where
\begin{equation*}
\|u\|_{W^1_p(I)}:=\|v_0 u\|_{L^p(I)}+\|v_1 Du\|_{L^p(I)},
\end{equation*}
and the subspaces $\wcnn^1_p(I)\subset\wcn^1_p(I)\subset W^1_p(I),$ where the second is the closure in  $W^1_p(I)$ of a subspace $\wcnn^1_p(I)$ of all absolutely continuous functions $ AC(I)$ of the form
\begin{equation*}
\wcnn^1_p(I):=\bigl\{f\in AC_\loc(I): f(0)=0,~ \supp f~\text{compact~in~}I,\|f\|_{W^1_p(I)}<\infty\bigr\}.
\end{equation*}
Let $(X,\|\cdot\|_X)$ be a normed space of measurable functions on $I.$ $X$ is called an ideal space provided it satisfies the property: if $|f|\leq|g|$ a.e. on $I$ and $g\in X,$ then $f\in X$ 
and $\|f\|\leq\|g\|.$ Put
\be\label{D-X}
\mathfrak{D}_X:=\Bigl\{g\in \mathfrak{M}(I):\int_I |fg|<\infty\, ~\text{for~ all~}\,f\in X\Bigr\}. 
\ee
For any $g\in\mathfrak{D}_X$ we define the functionals
$$
\mathbf{J}_{X}(g):=\sup_{0\not=f\in X}\frac{\int_I |fg|}{\|f\|_{X}}
\,\, \text{and}\,\,{J}_{X}(g):=\sup_{0\not=f\in X}\frac{|\int_I fg|}{\|f\|_{X}}
 $$
 and the associated spaces
 $$
X'_s:=\bigl\{g\in \Mleb(I):\|g\|_{X'_s}:=\mathbf{J}_X(g)<\infty\bigr\},
$$
$$
X'_w:=\bigl\{g\in \Mleb(I):\|g\|_{X'_w}:={J}_X(g)<\infty\bigr\},
$$
which we call \textquotedblleft strong\textquotedblright\, and \textquotedblleft weak\textquotedblright\, associated spaces, respectively.
A standard problem for an ideal space $(X,\|\cdot\|_X)$ is characterization of the  \textquotedblleft strong\textquotedblright\, associated space (or the K\"{o}the dual) (see \cite[Chapter 1]{BS}).
Observe that $J_X(g)=\mathbf{J}_X(g)$ for an ideal space $X.$ 
For a non-ideal space $J_X(g)$ and $\mathbf{J}_X(g)$ might be different (see \cite{PSU1} for examples). In particular, any weighted Sobolev space
$X\in\{\wcnn^1_p(I),\wcn^1_p(I),W^1_p(I)\}$ is an example for which it might be $J_X(g)\not=\mathbf{J}_X(g)$ \cite{PSU0}, \cite{PSU1}. 

Let $X\in\{\wcnn^1_p(I),\wcn^1_p(I),W^1_p(I)\}.$ A complete characterization of the associate spaces $X'_s$ and $X'_w$ is obtained in \cite[Sections 5, 6]{PSU1}. 
Besides, it was recently discovered that for power weight functions $v_0$ and $v_1$ the spaces $X'_s$ and $X'_w$ coincide with Ces\`{a}ro or Copson type spaces.

It appears a natural problem to characterize "double associate" spaces of the form $[X'_s]'_s,$ $[X'_s]'_w,$ $[X'_w]'_s,$ $[X'_w]'_w.$ Complete analysis of the problem for the Sobolev spaces with power weights and the Ces\`{a}ro or Copson type spaces is given in \cite{P1, S1}.

The main goal of the paper is to establish \textquotedblleft weak\textquotedblright\, associated reflexivity of the Sobolev space $X=\wcnn^1_p(I)$ if $1<p<\infty$. 
For the reflexivity of the \textquotedblleft strong\textquotedblright\, and \textquotedblleft weak\textquotedblright\, weigh\-ted Ces\`{a}ro and Copson type 
spaces see \cite{S2} and \cite{P2}, respectively.

In the next section we provide technical tools to deal with weighted Sobo\-lev spaces and their associated.  In particular, we remind characterization of $X'_s$ 
and $X'_w$ from \cite{PSU0}, when $X=\wcnn^1_p(I)$ and show that $[X'_w]'_s=\{0\}$ in this case (see Corollary \ref{corol}).

The main result is contained in Section 3, where we establish the \textquotedblleft weak\textquotedblright\, as\-so\-ci\-a\-ted reflexivity, 
that is $X=[X'_w]'_w$ of $X=\wcnn^1_p(I)$ (see Theorem \ref{theoremMain}). The characterization of $[X'_s]'_s=[X'_s]'_w$ is still open. However, for the power weights all $[X'_s]'_s,$ $[X'_s]'_w,$ $[X'_w]'_s,$ $[X'_w]'_w$ are discribed \cite{S1}.                                                                                                    

We use signs $:=$ and $=:$ for determining new quantities. We write $A\lesssim B,$ if $A\leq cB$ with some positive constant $c$, which depends only on $p$. $A\approx B$ is equivalent to 
$A\lesssim B \lesssim A$.
Symbols $\mathbb{N}$ and $\mathbb{Z}$ are used for the sets of natural and integer numbers, respectively. Denotation $\chi_E$ means the characteristic function (indicator) of a set $E.$ Uncertainties of 
the form $0\cdot\infty, \frac{\infty}{\infty}$ and $\frac{0}{0}$ are taken to be zero. 
Symbol $\Box$ stands for the end of a proof.  If $1<p<\infty,$ then $p':=\frac{p}{p-1}.$ 

\section{Sobolev spaces and their associated}

Let $1<p<\infty$. 
Suppose for simplicity that $I=(0,\infty)$ and there exists  $c\in (0,\infty)$ for which
\begin{equation}\label{S6}
\|v_1^{-1}\|_{{L^{p'}(0,c)}}\|v_0\|_{{L^p}(0,c)} = \|v_1^{-1}\|_{L^{p'}(c,\infty)}\|v_0\|_{L^p(c,\infty)} = \infty.
\end{equation}
Then by \cite[Lemma 1.6]{Oin} $\wcn^1_p(0,\infty)= W^1_p(0,\infty)$ and by the Oinarov--Otelbaev construction \cite{Oin}, \cite{PSU0}, \cite{PSU1} there exist unique strictly increasing  
absolutely continuous functions $a(t)$  and $b(t)$ such that 
\begin{equation*}
\lim_{t\to 0}a(t)=\lim_{t\to 0}b(t)=0,\qquad\lim_{t\to \infty}a(t)=\lim_{t\to \infty}b(t)=\infty, \qquad
a(t)<t<b(t)\quad (t>0),\end{equation*}
\begin{equation}\label{2}
\int_{a(t)}^t v_1^{-p'}=\int_t^{b(t)}v_1^{-p'},\quad t>0,
\end{equation}
({\sl equilibrium condition}) and
\begin{equation}
\label{3}
\biggl(\int_{a(t)}^{b(t)}v_1^{-p'}\biggr)^{1/p'}
\biggl(\int_{a(t)}^{b(t)}v_0^p\biggr)^{1/p}=1,\quad t>0.
\end{equation}
Put
\begin{equation*}
V_1(t):=\int_{\Delta(t)}v_1^{-p'},\qquad V_1^\pm(t):=\int_{\Delta^\pm(t)}v_1^{-p'}, 
\end{equation*}
\begin{equation*}
\Delta(t):=(a(t),b(t)), ~\Delta^-(t):=(a(t),t),~ \Delta^+(t):=(t,b(t))
\end{equation*}
and let $a^{-1}(t)$ be the function reverse to $a(t)$.
Define 
\begin{gather*}
\mathbb{G}(g):=
\biggl(\int_0^\infty v_1^{-p'}(t)\biggl|\int_t^{a^{-1}(t)}\frac{g(x)}{V_1(x)}\biggl(\int_{a(x)}^tv_1^{-p'} \biggr) dx
\biggr|^{p'}\,dt\biggr)^{1/p'},\\
\mathcal{G}(g):=
\biggl(\int_0^\infty v_1^{-p'}(t)\,V_1^{p'}(t)\, \biggl|\int_t^{a^{-1}(t)}\frac{g(x)}{V_1(x)}
\,dx\biggr|^{p'}\,dt\biggr)^{1/p'},\\
\mathsf{G}(g):=\biggl(\int_0^\infty\biggl(\int_t^{a^{-1}(t)}|g(x)|\,dx\biggr)^{p'}v_1^{-p'}(t) \,dt\biggr)^{1/p'}
\end{gather*} and notate $W_p^1:=W_p^1(0,\infty)$, $\wcn^1_p:=\wcn^1_p(0,\infty)$, $\wcnn_p^1:=\wcnn_p^1(0,\infty)$.
\begin{thm}
\label{T1}
{\rm\cite[Theorem 3.1]{ESU}, \cite[Theorem 4.1]{PSU0}, \cite[Theorem 4.5]{PSU0}}
Let $1<p<\infty$ and $g\in L^1_{\rm loc}(0,\infty)$. Suppose that $v_0,v_1\in {\mathscr V}_p(0,\infty)$, $\frac{1}{v_1}\in L^{p'}_\loc(0,\infty)$ and the condition \eqref{S6} is satisfied.
Then
\begin{align*}
{\mathbf J}_{W_p^1}(g)={\mathbf J}_{\wcnn_p^1}(g)\approx \mathsf{G}(g).
\end{align*}
If $X=W_p^1$ or $X=\wcnn_p^1$, then 
\begin{align*}
{X}^\prime_s=\bigl\{g\in L^1_\loc(0,\infty): \mathsf{G}(g)<\infty, \|g\|_{{X}^\prime_s}\approx \mathsf{G}(g)\bigr\}.
\end{align*}
Secondly,
\begin{equation}\label{UB}
J_{\wcnn_p^1}(g)\approx \mathbb{G}(g)+\mathcal{G}(g),
\end{equation}
and if $X=\wcnn_p^1$, then 
\begin{align*}\label{ESU12}
X^\prime_w=\bigl\{g\in L^1_\loc(0,\infty): \mathbb{G}(g)+\mathcal{G}(g)<\infty, \|g\|_{X^\prime_w}\approx \mathbb{G}(g)+\mathcal{G}(g)\bigr\}.
\end{align*}
Also, $J_{W_p^1}(g)<\infty$ if and only if $\mathsf{G}(g)<\infty$ and  $J_{W_p^1}(g)\approx \mathbb{G}(g)+\mathcal{G}(g)$.
\end{thm}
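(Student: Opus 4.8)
The plan is to reduce both associate norms to weighted Hardy‑type estimates via the representation $f(x)=\int_0^x Df(s)\,ds$, which holds for every $f\in\wcnn^1_p$ because $f(0)=0$. Writing $h:=Df$ and $Hh(x):=\int_0^x h(s)\,ds$, the compact support additionally forces the global constraint $\int_0^\infty h=0$, and the norm becomes $\|v_0\,Hh\|_{L^p}+\|v_1 h\|_{L^p}$. The engine is the Oinarov--Otelbaev discretization: from the equilibrium condition \eqref{2} one extracts a two‑sided sequence $\{t_k\}_{k\in\mathbb{Z}}$ for which the blocks $\Delta(t_k)=(a(t_k),b(t_k))$ cover $(0,\infty)$ with uniformly bounded overlap, while the normalization \eqref{3} balances the two summands of the norm on each block, so that the Hardy embedding localized to $\Delta(t_k)$ has norm $\approx 1$. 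All estimates are then assembled blockwise and summed by a discrete H\"older inequality.

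For the strong associate I would first note that $|f|\in\wcnn^1_p$ with $\||f|\|_{W^1_p}=\|f\|_{W^1_p}$, since $D|f|=\sgn(f)\,Df$ a.e.; hence $\mathbf{J}_{\wcnn^1_p}(g)=\mathbf{J}_{\wcnn^1_p}(|g|)$ and one may assume $f,g\ge 0$. The upper bound $\mathbf{J}\lesssim\mathsf{G}(g)$ then follows by decomposing $f$ over the blocks $\Delta(t_k)$, bounding each local contribution $\int_{\Delta(t_k)}fg$ by the product of the local $L^p$‑size of the $f$‑data and the factor $\int_t^{a^{-1}(t)}|g|$ weighted by $v_1^{-p'}$, and summing. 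For the matching lower bound $\mathbf{J}\gtrsim\mathsf{G}(g)$ I would construct, for each block, an explicit absolutely continuous bump supported near $\Delta(t_k)$ that equals a suitable value across $(t_k,a^{-1}(t_k))$ and has norm comparable to the reciprocal of the local weight; combining these using the bounded overlap realizes $\mathsf{G}(g)$ as a lower bound. Since these bumps lie in $\wcnn^1_p$ while the upper bound is valid for all of $W^1_p$, this simultaneously yields $\mathbf{J}_{W^1_p}(g)=\mathbf{J}_{\wcnn^1_p}(g)\approx\mathsf{G}(g)$ and the stated description of $X'_s$.

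The weak associate is where the real work lies. Here $|f|$ carries no information, so one cannot positivize; instead one rewrites the pairing by Fubini as $\int_0^\infty f\,g=\int_0^\infty h(s)\,G(s)\,ds$ with $G(s):=\int_s^\infty g$, and the constraint $\int_0^\infty h=0$ shows that $G$ enters only modulo an additive constant. The task is then to compute $\sup_h |\int hG|/\|f\|_{W^1_p}$ over this constrained class. Carrying out the localization now requires tracking the sign of $h$ against that of $G$ blockwise, with the extremal test functions redistributing mass so as to exploit cancellation; the resulting supremum splits into two scales of contribution — the variation of $G$ weighted by $V_1$ and a finer partial‑mass weighting $\int_{a(x)}^t v_1^{-p'}$ — producing exactly the two functionals $\mathcal{G}(g)$ and $\mathbb{G}(g)$. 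Matching the resulting upper and lower bounds gives $J_{\wcnn^1_p}(g)\approx\mathbb{G}(g)+\mathcal{G}(g)$ and the formula for $X'_w$. I expect this constrained oscillation analysis — the appearance of two distinct functionals and the construction of sign‑adapted test functions obeying $\int h=0$ — to be the principal obstacle, since it is precisely what distinguishes weak from strong associativity.

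Finally, for $W^1_p=\wcn^1_p$, which under \eqref{S6} is the $W^1_p$‑closure of $\wcnn^1_p$ and hence admits test functions whose support escapes to infinity, the relief afforded by $\int_0^\infty h=0$ is lost for this enlarged family. Consequently the supremum feels the full tail of $G$, and finiteness of $J_{W^1_p}(g)$ demands the stronger condition $\mathsf{G}(g)<\infty$; when it holds one automatically has $\mathbb{G}(g)+\mathcal{G}(g)\lesssim\mathsf{G}(g)<\infty$, and the non‑compactly‑supported test functions do not push the supremum past the weak expression, so $J_{W^1_p}(g)\approx\mathbb{G}(g)+\mathcal{G}(g)$. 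Throughout, the equalities between $W^1_p$‑ and $\wcnn^1_p$‑quantities follow by checking that the extremal test functions of the lower bounds lie in $\wcnn^1_p$ while the upper bounds are proved for all of $W^1_p$.
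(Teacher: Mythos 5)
The paper does not prove Theorem \ref{T1} at all: it is imported verbatim from \cite[Theorem 3.1]{ESU} and \cite[Theorems 4.1, 4.5]{PSU0}, so there is no in-paper argument to compare yours against. Judged against the proofs in those references, your outline has the right architecture --- the reduction $f(x)=\int_0^x Df$, the observation that compact support forces $\int_0^\infty Df=0$ and that this cancellation constraint is precisely what separates the weak from the strong associate norm, the positivization $D|f|=\sgn(f)\,Df$ for the strong case, and the Oinarov--Otelbaev fairway $(a(t),b(t))$ with \eqref{2}--\eqref{3} normalizing the blockwise Hardy constants to $\approx 1$. This is indeed how the cited papers proceed.

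That said, as a proof your text has a genuine gap: everything that makes the weak case hard is asserted rather than executed. The claim that the constrained supremum over $\{h:\int h=0\}$ ``splits into two scales of contribution'' producing exactly $\mathbb{G}(g)$ and $\mathcal{G}(g)$ is the entire content of \cite[Theorem 4.5]{PSU0}; without writing down the integration by parts against the equilibrium measure $v_1^{-p'}$ (the identity \eqref{eq} is what generates the kernel $\int_{a(x)}^t v_1^{-p'}$ in $\mathbb{G}$) and without exhibiting the sign-adapted test functions with $\int h=0$ realizing the lower bound, one cannot see why these two particular functionals, and no third one, appear. The same applies to the final assertion about $J_{W^1_p}$: you correctly note $\mathbb{G}(g)+\mathcal{G}(g)\lesssim\mathsf{G}(g)$ via \eqref{N2}, but the implication $J_{W^1_p}(g)<\infty\Rightarrow\mathsf{G}(g)<\infty$ requires an actual construction of test functions in $W^1_p\setminus\wcnn^1_p$ and is not a consequence of the heuristic that ``the relief of $\int h=0$ is lost.'' A smaller technical point: for $g\in L^1_{\rm loc}$ the tail $G(s)=\int_s^\infty g$ need not exist, so the Fubini step must be run with the truncated primitive $\int_s^T g$ on $\supp f\subset[0,T]$, using $\int h=0$ to show independence of $T$; as written the pairing $\int hG$ is not defined.
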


\begin{rem}\label{rm}
Let $v_0=v_1\equiv 1.$ Then we can open the right hand side of \eqref{nrm} for $W^{1,p}(0,\infty),$ using \cite[Example 7.2]{PSU1}. Namely, we have
\begin{align*}
\|v\|_{-1,p'}\approx&
\Biggl(\int_0^\infty \biggl|\int_t^{t+\frac{1}{2}}v
\biggr|^{p'}\, dt\Biggr)^{\frac{1}{p'}}\\
& +
\Biggl(\int_0^{\frac{1}{2}} t^{-p'}\biggl|\int_0^t\left(\int_t^{y+\frac{1}{2}}v\right)\,dy
\biggr|^{p'}\, dt
+\int_{\frac{1}{2}}^\infty \biggl|\int_{t-\frac{1}{2}}^t\left(\int_t^{y+\frac{1}{2}}v\right)\,dy
\biggr|^{p'}\, dt\Biggr)^{\frac{1}{p'}}.
\end{align*}
\end{rem}

\begin{lem}\label{Norma}
Let $1<p<\infty$ and $X=\wcnn_p^1$, then the functional $\|g\|_{X^\prime_w}$ is a norm. 
\end{lem}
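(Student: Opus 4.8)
The plan is to verify the three defining properties of a norm for the functional $\|g\|_{X'_w}=J_X(g)=\sup_{0\ne f\in X}\frac{|\int_I fg|}{\|f\|_X}$, treating subadditivity and absolute homogeneity as routine and concentrating the real work on definiteness. Finiteness on $X'_w$ holds by definition of the space. Absolute homogeneity $\|\lambda g\|_{X'_w}=|\lambda|\,\|g\|_{X'_w}$ is immediate upon pulling the scalar out of $\bigl|\int_I f\lambda g\bigr|$. For the triangle inequality I would use that for each admissible $f$ one has $\bigl|\int_I f(g_1+g_2)\bigr|\le\bigl|\int_I fg_1\bigr|+\bigl|\int_I fg_2\bigr|$, divide by $\|f\|_X$, and pass to the supremum, which splits into the sum of the two separate suprema.

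The substantive point is definiteness: I must show that $\|g\|_{X'_w}=0$ forces $g=0$ almost everywhere. The key is that the test class $\wcnn_p^1$ is rich enough to separate locally integrable functions. Concretely, I would first check the inclusion $C_0^\infty(0,\infty)\subset\wcnn_p^1$: any $f\in C_0^\infty(0,\infty)$ has support in some $[\alpha,\beta]\subset(0,\infty)$, so it vanishes near $0$ (whence the boundary condition $f(0)=0$) and near $\infty$, and it satisfies $\|v_0f\|_{L^p}^p\le\|f\|_\infty^p\int_\alpha^\beta v_0^p<\infty$ together with $\|v_1Df\|_{L^p}^p\le\|Df\|_\infty^p\int_\alpha^\beta v_1^p<\infty$, the finiteness coming from $v_0,v_1\in{\mathscr V}_p(0,\infty)\subset L^p_\loc(0,\infty)$. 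Hence $\|f\|_{W^1_p}<\infty$ and every such $f$ is an admissible competitor in the supremum defining $J_X$.

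From $\|g\|_{X'_w}=0$ and this inclusion it follows that $\int_0^\infty fg=0$ for every $f\in C_0^\infty(0,\infty)$. Since $g\in L^1_\loc(0,\infty)$ — which one sees by testing against a cut-off equal to $1$ on any compact subinterval, or directly from the description of $X'_w$ in Theorem \ref{T1} — the fundamental lemma of the calculus of variations (the du Bois--Reymond lemma) yields $g=0$ almost everywhere on $(0,\infty)$. I expect the only genuine care to lie in the definiteness step, namely in confirming that $C_0^\infty(0,\infty)$ really sits inside $\wcnn_p^1$ under the running hypotheses and that the weak pairing $\int_I fg$ is absolutely convergent for these $f$; once this is secured, the classical variational lemma closes the argument and the remaining axioms are formal.
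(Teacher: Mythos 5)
Your proof is correct, but it takes a genuinely different route from the paper's. The paper deduces definiteness from its Theorem \ref{T1}: $\|g\|_{X'_w}=0$ forces $\mathbb{G}(g)=\mathcal{G}(g)=0$, hence the inner functional $G(t)=\int_t^{a^{-1}(t)}\frac{g(x)}{V_1(x)}\bigl(\int_{a(x)}^t v_1^{-p'}\bigr)\,dx$ vanishes identically, and differentiating it (the boundary term at $x=t$ contributes $\tfrac12 g(t)$ via the equilibrium identity $V_1^-(t)=\tfrac12V_1(t)$, while the interior term is killed by $\mathcal{G}(g)=0$) recovers $g=0$ a.e. You instead argue directly from the definition of $J_X$, using the inclusion $C_0^\infty(0,\infty)\subset\wcnn_p^1$ together with $g\in L^1_\loc$ and the du Bois--Reymond lemma; the inclusion is indeed valid under the standing hypotheses of Section~2 ($v_0,v_1\in L^p_\loc$ gives $\|f\|_{W^1_p}<\infty$, and $1/v_1\in L^{p'}_\loc$ guarantees no nonzero $f\in C_0^\infty$ has vanishing norm, though by the paper's convention on $\tfrac{0}{0}$ even that degenerate case is harmless), and the absolute convergence of the pairing is built into $X'_w\subset\mathfrak{D}_X$. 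Your argument is more elementary and self-contained, making no use of the Oinarov--Otelbaev characterization; the paper's is a two-line corollary of machinery it has already established and shows explicitly how $g$ is reconstructed from the functionals $\mathbb{G}$ and $\mathcal{G}$. Both proofs correctly reduce the lemma to definiteness, the remaining norm axioms being formal for a supremum of linear functionals.
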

\begin{proof}
It is sufficient to show that 
$$
\|g\|_{X^\prime_w}=0\ \ \Rightarrow\ \ g=0\ \text{a.e. on}\ (0,\infty).
$$
Let $\|g\|_{X^\prime_w}=0$. Then $\mathbb{G}(g)=\mathcal{G}(g)=0$. In particular,
$$
G(t):=\int_t^{a^{-1}(t)}\frac{g(x)}{V_1(x)}\biggl(\int_{a(x)}^tv_1^{-p'} \biggr)\, dx=0\ \ \text{a.e. on}\ (0,\infty).
$$
Hence,
$$
0=G^\prime(t)=\frac{g(t)}{2}\ \ \text{a.e. on}\ (0,\infty).
$$ 
\end{proof} 
Let $1<r<\infty,$ $u\in {\mathscr V}_r(0,\infty).$ Denote 
\begin{gather*}
L^r_u(0,\infty):=\bigl\{h: \|h\|_{r,u}:=\|uh\|_{L^r(0,\infty)}<\infty\bigr\},\\
\mathbb{W}_{p',1/{v_1}}:=\bigl\{g\in L^1_\loc(0,\infty): \|g\|_{\mathbb{W}_{p',1/{v_1}}}:= \mathsf{G}(g)<\infty\bigr\},\\
\mathscr{W}_{p',1/{v_1}}:=\bigl\{g\in L^1_\loc(0,\infty):  \|g\|_{\mathscr{W}_{p',1/{v_1}}}:=\mathbb{G}(g)+\mathcal{G}(g)<\infty\bigr\}.
\end{gather*}

\begin{rem}\label{remark}
From \eqref{UB} we obtain H\"{o}lder's type inequality (see \cite[Theorem 2.4]{BS}) in $\wcnn_p^1$ and $\mathscr{W}_{p',1/{v_1}}$: if $1<p<\infty$ then
$$
\biggl|\int_0^\infty fg\biggr|\lesssim \|f\|_{\wcnn_p^1}\|g\|_{\mathscr{W}_{p',1/{v_1}}}\quad\textrm{for any  }f\in \wcnn_p^1\textrm{ and  }g\in \mathscr{W}_{p',1/{v_1}}.
$$ 
\end{rem}

The norm in $\mathscr{W}_{p',1/{v_1}}$ admits an alternative formulation in terms of a sequence $\{\eta_k\}_{k\in\mathbb{Z}}$ of the form:
$$\eta_0=1, \qquad \eta_{k}=a^{-1}(\eta_{k-1})\quad (k\in\mathbb{N}), \qquad \eta_{k}=a(\eta_{k+1})\quad (-k\in\mathbb{N}).$$ To be able to declare it in the next lemma we denote \begin{equation*}
G^{(\delta)}(t):={[V_1(t)]^{\delta}}\int_t^{a^{-1}(t)}
\frac{g(x)}{V_1(x)}\Bigl(
\int_{a(x)}^tv_1^{-p'}\Bigr)^{1-\delta} dx,\ \ \ \delta=0,1,
\end{equation*} 
and observe that for $t\in[\eta_{k-1},\eta_k]$
\begin{gather}
G^{(\delta)}(t)=G_{1,k}^{(\delta)}(t)+G_{2,k}^{(\delta)}(t),\label{22_1}\\ 
G_{1,k}^{(\delta)}(t):={V^{\delta}_1(t)}\int_t^{\eta_k}
\frac{g(x)}{V_1(x)}\Bigl(
\int_{a(x)}^tv_1^{-p'}\Bigr)^{1-\delta} dx,\nonumber\\
G_{2,k}^{(\delta)}(t):={V^{\delta}_1(t)}\int_{\eta_k}^{a^{-1}(t)}
\frac{g(x)}{V_1(x)}\Bigl(
\int_{a(x)}^tv_1^{-p'}\Bigr)^{1-\delta} dx.\nonumber
\end{gather}

\begin{lem}\label{norm}Let $1<p<\infty,$  $v_0,v_1\in {\mathscr V}_p(0,\infty)$, $\frac{1}{v_1}\in L^{p'}_\loc(0,\infty)$ and the condition \eqref{S6} is satisfied.
Then \begin{align}
\|g\|_{\mathscr{W}_{p',1/{v_1}}}^{p'}\approx& \sum_{k\in\mathbb{Z}}\biggl\{\int_{\eta_{k-1}}^{\eta_k}
v_1^{-p'}(t)\bigl|G_{1,k}^{(0)}(t)\bigr|^{p'}\,dt+\int_{\eta_{k-1}}^{\eta_k}
v_1^{-p'}(t)\bigl|G_{2,k}^{(0)}(t)
\bigr|^{p'}\,dt \nonumber \\
&+\int_{\eta_{k-1}}^{\eta_k}
v_1^{-p'}(t)\bigl|G_{1,k}^{(1)}(t)\bigr|^{p'}\,dt+\int_{\eta_{k-1}}^{\eta_k}
v_1^{-p'}(t)\bigl|G_{2,k}^{(1)}(t)
\bigr|^{p'}\,dt\biggr\}.\label{qq1}
\end{align} 
\end{lem}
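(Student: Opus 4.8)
The plan is to reduce the two terms of the norm to a single pair of generating integrals, to cut $(0,\infty)$ into the Oinarov--Otelbaev blocks $[\eta_{k-1},\eta_k]$, and then to isolate the elementary half of \eqref{qq1} from the genuinely delicate ``no cancellation'' half. Throughout I write $G^{(\delta)}[h]$ for the quantity $G^{(\delta)}$ built from a function $h$ in place of $g$. First, since $(A+B)^{p'}\approx A^{p'}+B^{p'}$ for $A,B\ge 0$ and since, directly from the definitions, $\mathbb{G}(g)^{p'}=\int_0^\infty v_1^{-p'}|G^{(0)}|^{p'}$ and $\mathcal{G}(g)^{p'}=\int_0^\infty v_1^{-p'}|G^{(1)}|^{p'}$, we obtain
\begin{equation*}
\|g\|_{\mathscr{W}_{p',1/{v_1}}}^{p'}=\bigl(\mathbb{G}(g)+\mathcal{G}(g)\bigr)^{p'}\approx\int_0^\infty v_1^{-p'}\bigl|G^{(0)}\bigr|^{p'}+\int_0^\infty v_1^{-p'}\bigl|G^{(1)}\bigr|^{p'}.
\end{equation*}
Splitting each integral over the blocks and using $a(\eta_k)=\eta_{k-1}$, so that $t\le\eta_k\le a^{-1}(t)\le\eta_{k+1}$ for $t\in[\eta_{k-1},\eta_k]$, legitimizes the decomposition $G^{(\delta)}=G_{1,k}^{(\delta)}+G_{2,k}^{(\delta)}$ from \eqref{22_1}. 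The bound of the left side of \eqref{qq1} by the right side is now immediate: on each block $|G^{(\delta)}|^{p'}\lesssim|G_{1,k}^{(\delta)}|^{p'}+|G_{2,k}^{(\delta)}|^{p'}$, and one integrates against $v_1^{-p'}$ and sums over $k$ and $\delta\in\{0,1\}$.

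The reverse bound is the heart of the matter, and the plan is to pass through a localization step. The key structural observation is that $G_{1,k}^{(\delta)}$ involves $g$ only on $[\eta_{k-1},\eta_k]$, while $G_{2,k}^{(\delta)}$ involves $g$ only on $[\eta_k,\eta_{k+1}]$. Consequently, for $g_k:=g\chi_{[\eta_{k-1},\eta_k]}$ the transform $G^{(\delta)}[g_k]$ is supported on $[\eta_{k-2},\eta_{k-1}]\cup[\eta_{k-1},\eta_k]$, where it coincides with $G_{2,k-1}^{(\delta)}$ and $G_{1,k}^{(\delta)}$ respectively. Applying Theorem \ref{T1} to each $g_k$ and invoking the reduction above yields
\begin{equation*}
\|g_k\|_{\mathscr{W}_{p',1/{v_1}}}^{p'}\approx\sum_{\delta=0,1}\Bigl(\int_{\eta_{k-2}}^{\eta_{k-1}}v_1^{-p'}\bigl|G_{2,k-1}^{(\delta)}\bigr|^{p'}+\int_{\eta_{k-1}}^{\eta_k}v_1^{-p'}\bigl|G_{1,k}^{(\delta)}\bigr|^{p'}\Bigr),
\end{equation*}
and summing in $k$ shows that the right side of \eqref{qq1} is comparable to $\sum_k\|g_k\|_{\mathscr{W}_{p',1/{v_1}}}^{p'}$. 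Thus the lemma is equivalent to the near--orthogonality relation $\|g\|_{\mathscr{W}_{p',1/{v_1}}}^{p'}\approx\sum_k\|g_k\|_{\mathscr{W}_{p',1/{v_1}}}^{p'}$, one half of which ($\|g\|^{p'}\lesssim\sum_k\|g_k\|^{p'}$) is again the triangle inequality.

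The remaining inequality $\sum_k\|g_k\|^{p'}\lesssim\|g\|^{p'}$ is where I expect the main difficulty. Using $G^{(\delta)}[g]=\sum_k G^{(\delta)}[g_k]$ together with the support property, it unwinds to the assertion
\begin{equation*}
\sum_{\delta=0,1}\sum_j\int_{\eta_{j-1}}^{\eta_j}v_1^{-p'}\bigl(|G^{(\delta)}[g_j]|^{p'}+|G^{(\delta)}[g_{j+1}]|^{p'}\bigr)\lesssim\sum_{\delta=0,1}\sum_j\int_{\eta_{j-1}}^{\eta_j}v_1^{-p'}\bigl|G^{(\delta)}[g_j]+G^{(\delta)}[g_{j+1}]\bigr|^{p'},
\end{equation*}
that is, that on each block the head contribution of $g_j$ and the tail contribution of $g_{j+1}$ do not cancel. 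Pointwise this is false, so the substantive task is to establish the local estimate
\begin{equation*}
\|g_j\|_{\mathscr{W}_{p',1/{v_1}}}^{p'}\lesssim\sum_{\delta=0,1}\Bigl(\int_{\eta_{j-2}}^{\eta_{j-1}}v_1^{-p'}|G^{(\delta)}[g]|^{p'}+\int_{\eta_{j-1}}^{\eta_j}v_1^{-p'}|G^{(\delta)}[g]|^{p'}\Bigr),
\end{equation*}
controlling the single--block norm by the full generating integrals over the two blocks where $g_j$ lives, and then to sum over $j$. I expect this to follow from the fact that the two transforms $\delta=0$ and $\delta=1$ carry complementary information about $g$ on each block -- already visible in the identity $G'(t)=\tfrac12 g(t)$ exploited in Lemma \ref{Norma} -- together with the equilibrium and normalization conditions \eqref{2} and \eqref{3}, which tie the weight $V_1$ and the measure $v_1^{-p'}$ on neighbouring blocks to one another, so that a cancellation of $g_j$ against $g_{j-1}$ on one block and against $g_{j+1}$ on the other cannot happen for both values of $\delta$ simultaneously.
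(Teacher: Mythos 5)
Your upper estimate and your reduction of \eqref{qq1} to the near-orthogonality statement $\sum_k\|g\chi_{[\eta_{k-1},\eta_k]}\|_{\mathscr{W}_{p',1/{v_1}}}^{p'}\approx\|g\|_{\mathscr{W}_{p',1/{v_1}}}^{p'}$ are correct and agree with the easy half of the paper's argument; the support bookkeeping ($G_{1,k}^{(\delta)}$ sees $g$ only on $[\eta_{k-1},\eta_k]$, $G_{2,k}^{(\delta)}$ only on $[\eta_k,\eta_{k+1}]$, bounded overlap of the blocks) is also fine. But the one inequality that carries the entire content of the lemma, $\sum_k\|g_k\|_{\mathscr{W}_{p',1/{v_1}}}^{p'}\lesssim\|g\|_{\mathscr{W}_{p',1/{v_1}}}^{p'}$, is exactly where you stop proving and start hoping: ``I expect this to follow from the fact that the two transforms carry complementary information \dots\ so that a cancellation \dots\ cannot happen for both values of $\delta$ simultaneously'' is a heuristic, not an argument. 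Nothing in the proposal indicates how the identity $G'=g/2$ (which gives injectivity of the $\delta=0$ transform, not a quantitative lower bound) or the conditions \eqref{2}--\eqref{3} would actually be deployed to exclude simultaneous cancellation of the head of $g_{j+1}$ against the tail of $g_j$ for both $\delta=0$ and $\delta=1$. So there is a genuine gap, located precisely at the step you yourself identify as the heart of the matter.

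The paper closes this gap by duality rather than by a direct non-cancellation analysis. From \eqref{UB} one has $\bigl|\int_0^\infty fg\bigr|\le C\|f\|_{\wcnn^1_p}$ with $C=\|g\|_{\mathscr{W}_{p',1/{v_1}}}$; the proof then constructs, for each $\delta$ and each truncation level $N$, explicit test functions $F_{1,N}^{(\delta)}+F_{2,N}^{(\delta)}\in\wcnn^1_p$ whose pairing with $g$ reproduces \emph{exactly} the block sums (identity \eqref{0}), and whose Sobolev norm $\|F v_0\|_p+\|F'v_1\|_p$ is bounded by the $(p'-1)$-st power of those same block sums; these norm bounds are weighted Hardy inequalities on each block, with constants $A_1,A_2,\mathbb{A}_1,\mathbb{A}_2$ controlled by the normalization \eqref{3} and the equilibrium identity \eqref{eq}. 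Dividing out yields the lower estimate. Since $\|\cdot\|_{\mathscr{W}_{p',1/{v_1}}}$ is not an ideal norm, exhibiting such test functions is essentially the only available handle on lower bounds here, so even if you keep your localization framework you will still need to carry out this duality step to finish the proof.
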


\begin{proof} 
The upper estimate follows from \eqref{22_1} and
$$
\|g\|_{\mathscr{W}_{p',1/{v_1}}}^{p'}\lesssim\sum_{k\in\mathbb{Z}}\int_{\eta_{k-1}}^{\eta_k}v_1^{-p'}(t)\Bigr\{\bigl|
G^{(0)}(t)\bigr|^{p'}+\bigl|
G^{(1)}(t)\bigr|^{p'}\Bigr\}dt.
$$

To establish the lower estimate we assume that the inequality
$$
\biggl|\int_0^\infty fg\biggr|\le C\|f\|_{\wcnn^1_p}=
C\Bigl\{\|fv_0\|_p+\|f'v_1\|_p\Bigr\}
$$ 
holds with $C=\|g\|_{\mathscr{W}_{p',1/{v_1}}}$, and let for some $N\in\mathbb{N}$
\begin{align*}
{F}_{1,N}^{(\delta)}(x):=
\frac{\sum_{|k|\le N}\chi_{[\eta_{k-1},\eta_k]}(x)}{V_1^-(x)}
\int_{\eta_{k-1}}^x v_1^{-p'}(t)\bigl[\sgn G^{(\delta)}_{1,k}(t)\bigr] \biggl(\int_{a(x)}^t v_1^{-p'}\biggr)^{1-\delta}
[V_1(t)]^{\delta}\bigl|G^{(\delta)}_{1,k}(t)\bigr|^{p'-1}\,dt,\end{align*}
\begin{align*}
{F}_{2,N}^{(\delta)}(x):=
\frac{\sum_{|k|\le N}\chi_{[\eta_{k},\eta_{k+1}]}(x)}{V_1^-(x)}
\int_{a(x)}^{\eta_{k}} v_1^{-p'}(t)\bigl[\sgn G^{(\delta)}_{2,k}(t)\bigr]\biggl(\int_{a(x)}^t v_1^{-p'}\biggr)^{1-\delta}
[V_1(t)]^{\delta}\bigl|G^{(\delta)}_{2,k}(t)\bigr|^{p'-1}\,dt.
\end{align*} 
If $f={F}^{(\delta)}_{1,N}+{F}^{(\delta)}_{2,N}$ then
\begin{equation}\label{0}\int_0^\infty g(x)f(x)\,dx=
\sum_{|k|\le N}\biggl\{\int_{\eta_{k-1}}^{\eta_k}
v_1^{-p'}(t)\bigl|G_{1,k}^{(\delta)}(t)\bigr|^{p'}\,dt+\int_{\eta_{k-1}}^{\eta_k}
v_1^{-p'}(t)\bigl|G_{2,k}^{(\delta)}(t)
\bigr|^{p'}\,dt\biggr\}. 
\end{equation} 
To evaluate
\begin{align*}
\|{F}^{(\delta)}_{1,N}v_0\|_p^p=&\sum_{|k|\le N}\int_{\eta_{k-1}}^{\eta_k} v_0^{p}(x)
\biggl|\frac{1}{V_1^-(x)}\int_{\eta_{k-1}}^x v_1^{-p'}(t)
\bigl[\sgn G^{(\delta)}_{1,k}(t)\bigr]\\&\times \biggl(\int_{a(x)}^t v_1^{-p'}\biggr)^{1-\delta}
[V_1(t)]^{\delta}\bigl|G^{(\delta)}_{1,k}(t)\bigr|^{p'-1}\,dt\biggr|^p\,dx
\end{align*} 
we apply well known characterization of weighted Hardy's inequality \cite[p. 6]{KPS}, in order to obtain
\begin{equation*}
\int_{\eta_{k-1}}^{\eta_k} v_0^{p}(x)
\biggl(\int_{\eta_{k-1}}^x v_1^{-p'}(t)
\bigl|G^{(\delta)}_{1,k}(t)\bigr|^{p'-1}\,dt\biggr)^p\,dx
\lesssim A_{1}^p\int_{\eta_{k-1}}^{\eta_k}
v_1^{-p'}\bigl|G_{1,k}^{(\delta)}\bigr|^{p'},
\end{equation*} 
where (see \eqref{3})
\begin{align*}
A_{1}:=\sup_{\eta_{k-1}<t<\eta_k}\biggl(\int_t^{\eta_k}  v_0^{p}\biggr)^{1/p}\biggl(\int_{\eta_{k-1}}^t v_1^{-p'}\biggr)^{1/p'}\le
\biggl(\int_{\eta_{k-1}}^{\eta_k}  v_0^{p}\biggr)^{1/p}\biggl(\int_{\eta_{k-1}}^{\eta_{k}} v_1^{-p'}\biggr)^{1/p'}\le  1.
\end{align*} 
Therefore, by using in the $\delta=1\,-$case the relation 
\begin{equation}\label{Gk1} V_1(t)=2V_1^+(t)\le 2\int_{\eta_{k-1}}^{b(t)}v_1^{-p'}\le 2\int_{\eta_{k-1}}^{b(x)}v_1^{-p'} \le 2V_1(x)=4V_1^-(x), \quad \eta_{k-1}\le t\le x,\end{equation} we have for the both $\delta=0,1$: 
\begin{align}\label{1}
\|{F}^{(\delta)}_{1,N}v_0\|_p^p\le&
\sum_{|k|\le N}\int_{\eta_{k-1}}^{\eta_k} v_0^{p}(x)
\biggl(\int_{\eta_{k-1}}^x v_1^{-p'}(t)
\bigl|G^{(\delta)}_{1,k}(t)\bigr|^{p'-1}\,dt\biggr)^p\,dx\nonumber\\\lesssim&
\sum_{|k|\le N}\int_{\eta_{k-1}}^{\eta_k}
v_1^{-p'}\bigl|G_{1,k}^{(\delta)}\bigr|^{p'}
=:\bigl[\mathbf{G}^{(\delta)}_{1,N}(g)\bigr]^{p'}.
\end{align}
Analogously, we evaluate, by making use of \begin{equation}\label{Gk2}V_1(t)=2V_1^+(t)\le 2\int_{a(x)}^{b(\eta_k)}v_1^{-p'}\le 2V_1(x)=4 V_1^-(x), \quad \eta_k\le x\le\eta_{k+1},\end{equation} that 
\begin{align*}
\|{F}^{(\delta)}_{2,N}v_0\|_p^p=&\sum_{|k|\le N}\int_{\eta_{k}}^{\eta_{k+1}} v_0^{p}(x)
\biggl|\frac{1}{V_1^-(x)}\int_{a(x)}^{\eta_{k}} v_1^{-p'}(t)
\bigl[\sgn G^{(\delta)}_{2,k}(t)\bigr]\\&\times \biggl(\int_{a(x)}^t v_1^{-p'}\biggr)^{1-\delta}
[V_1(t)]^{\delta}\bigl|G^{(\delta)}_{2,k}(t)\bigr|^{p'-1}\,dt\biggr|^p\,dx\\
\le& \sum_{|k|\le N}\int_{\eta_{k}}^{\eta_{k+1}} v_0^{p}(x)
\biggl(\int_{a(x)}^{\eta_{k}} v_1^{-p'}(t)
\bigl|G^{(\delta)}_{2,k}(t)\bigr|^{p'-1}\,dt\biggr)^p\,dx 
\lesssim \, A_{2}^p\int_{\eta_{k-1}}^{\eta_k}
v_1^{-p'}\bigl|G_{1,k}^{(\delta)}\bigr|^{p'},
\end{align*} 
where (see \eqref{3})
\begin{equation*}
A_{2}:=\sup_{\eta_{k-1}<t<\eta_k}\biggl(\int_{\eta_k}^{a^{-1}(t)}  v_0^{p}\biggr)^{1/p}\biggl(\int_t^{\eta_{k}} v_1^{-p'}\biggr)^{1/p'}\le 1.
\end{equation*} 
Therefore,
\begin{equation}\label{11}
\|{F}^{(\delta)}_{2,N}v_0\|_p^p
\lesssim
\sum_{|k|\le N}\int_{\eta_{k-1}}^{\eta_k}
v_1^{-p'}\bigl|G_{2,k}^{(\delta)}\bigr|^{p'}=:\bigl[\mathbf{G}^{(\delta)}_{2,N}(g)\bigr]^{p'}.
\end{equation}
Further, since 
\begin{align*}
[{F}_{1,N}^{(\delta)}(x)]'=&-\sum_{|k|\le N}\chi_{[\eta_{k-1},\eta_k]}(x)
\frac{\bigl[V_1^-(x)\bigr]'}{\bigl[V_1^-(x)\bigr]^2}\int_{\eta_{k-1}}^x v_1^{-p'}(t)\bigl[\sgn G^{(\delta)}_{1,k}(t)\bigr]\\&\times 
\biggl(\int_{a(x)}^t v_1^{-p'}\biggr)^{1-\delta}
[V_1(t)]^{\delta}\bigl|G^{(\delta)}_{1,k}(t)\bigr|^{p'-1}\,dt+\sum_{|k|\le N}\chi_{[\eta_{k-1},\eta_k]}(x)\\&\times\begin{cases}
v_1^{-p'}(x)\bigl[\sgn G^{(0)}_{1,k}(x)\bigr] \bigl|G^{(0)}_{1,k}(x)\bigr|^{p'-1}\\
-\displaystyle\frac{v_1^{-p'}(a(x))\,a'(x)}{V_1^-(x)}\int_{\eta_{k-1}}^x v_1^{-p'}\,\bigl[\sgn G^{(0)}_{1,k}\bigr] \bigl|G^{(0)}_{1,k}\bigr|^{p'-1}, & \delta=0,\\
2v_1^{-p'}(x)\bigl[\sgn G^{(1)}_{1,k}(x)\bigr] \bigl|G^{(1)}_{1,k}(x)\bigr|^{p'-1}, & \delta=1,\end{cases}
\end{align*}
\begin{align*}
[{F}_{2,N}^{(\delta)}(x)]'=&-\sum_{|k|\le N}\chi_{[\eta_{k},\eta_{k+1}]}(x)
\frac{\bigl[V_1^-(x)\bigr]'}{\bigl[V_1^-(x)\bigr]^2}\int_{a(x)}^{\eta_{k}} v_1^{-p'}(t)\bigl[\sgn G^{(\delta)}_{2,k}(t)\bigr] \\&\times\biggl(\int_{a(x)}^t v_1^{-p'}\biggr)^{1-\delta}
[V_1(t)]^{\delta}\bigl|G^{(\delta)}_{2,k}(t)\bigr|^{p'-1}\,dt-\sum_{|k|\le N}\chi_{[\eta_{k},\eta_{k+1}]}(x)\\&\times\begin{cases}
\displaystyle\frac{v_1^{-p'}(a(x))\,a'(x)}{V_1^-(x)}\int_{a(x)}^{\eta_{k}} v_1^{-p'}\,\bigl[\sgn G^{(0)}_{2,k}\bigr] \bigl|G^{(0)}_{2,k}\bigr|^{p'-1}, & \delta=0,\\
\displaystyle\frac{v_1^{-p'}(a(x))\,a'(x)}{V_1^-(x)}&\\
\times\bigl[\sgn G^{(1)}_{2,k}(a(x))\bigr]V_1^-(a(x)) \bigl|G^{(1)}_{2,k}(a(x))\bigr|^{p'-1}, & \delta=1,\end{cases}
\end{align*} 
then
\begin{equation*}\label{8}
\|[{F}_{1,N}^{(\delta)}]'v_1\|_p
\le\begin{cases} I_1+\bigl[\mathbf{G}^{(0)}_{1,N}(g)\bigr]^{p'-1}+II_1, &\delta=0,\\
I_1+\bigl[\mathbf{G}^{(1)}_{1,N}(g)\bigr]^{p'-1}, &\delta=1,\end{cases}
\end{equation*} 
where
\begin{align*}
I_1^p:=\sum_{|k|\le N}\int_{\eta_{k-1}}^{\eta_k} v_1^{p}(x)
\frac{\Bigl|\bigl[V_1^-(x)\bigr]'\Bigr|^p}{\bigl[V_1^-(x)\bigr]^{2p}} \biggl(\int_{\eta_{k-1}}^x v_1^{-p'}(t) \biggl(\int_{a(x)}^t v_1^{-p'}\biggr)^{1-\delta}
[V_1(t)]^{\delta}\bigl|G^{(\delta)}_{1,k}(t)\bigr|^{p'-1}\,dt\biggr)^p\,dx
\end{align*} 
and 
\begin{align*}
II_1^p:=\sum_{|k|\le N}\int_{\eta_{k-1}}^{\eta_k} v_1^{p}(x)\bigl[V_1^-(x)\bigr]^{-p}\bigl[v_1^{-p'}(a(x))\,a'(x)\bigr]^p
 \biggl(\int_{\eta_{k-1}}^x v_1^{-p'}(t) \bigl|G^{(0)}_{1,k}(t)\bigr|^{p'-1}\,dt\biggr)^p\,dx.
\end{align*}
In view of $v_1^{-p'}(a(x))a'(x)\le 2v_1^{-p'}(x)$ (see \eqref{eq}), we obtain, by using \eqref{Gk1} in the $\delta=1\,-$case, that
\begin{align*}
I_1^p\le&\sum_{|k|\le N}\int_{\eta_{k-1}}^{\eta_k} v_1^{p}(x)
\frac{\bigl|v_1^{-p'}(x)-v_1^{-p'}(a(x))a'(x)\bigr|^p}{\bigl[V_1^-(x)\bigr]^{p}} \biggl(\int_{\eta_{k-1}}^xv_1^{-p'}\bigl|G^{(\delta)}_{1,k}\bigr|^{p'-1}\biggr)^p\,dx\\
\le& \sum_{|k|\le N}\int_{\eta_{k-1}}^{\eta_k} v_1^{p}(x)
\frac{\bigl[v_1^{-p'}(x)+v_1^{-p'}(a(x))a'(x)\bigr]^p}{\bigl[V_1^-(x)\bigr]^{p}} \biggl(\int_{\eta_{k-1}}^xv_1^{-p'}\bigl|G^{(\delta)}_{1,k}\bigr|^{p'-1}\biggr)^p\,dx\\
\le& 3^{p}\sum_{|k|\le N}\int_{\eta_{k-1}}^{\eta_k} v_1^{-p'}(x)
\bigl[V_1^-(x)\bigr]^{-p} \biggl(\int_{\eta_{k-1}}^xv_1^{-p'}\bigl|G^{(\delta)}_{1,k}\bigr|^{p'-1}\biggr)^p\,dx.
\end{align*} 
Analogously, 
\begin{equation*}
II_2^p\le 2^p\sum_{|k|\le N}\int_{\eta_{k-1}}^{\eta_k} v_1^{-p'}(x)\bigl[V_1^-(x)\bigr]^{-p} \biggl(\int_{\eta_{k-1}}^xv_1^{-p'}\bigl|G^{(0)}_{1,k}\bigr|^{p'-1}\biggr)^p\,dx.
\end{equation*} On the strength of the boundedness characteristics for the Hardy operator \cite[p. 6]{KPS},
\begin{align*}
\int_{\eta_{k-1}}^{\eta_k} v_1^{-p'}(x)
\bigl[V_1^-(x)\bigr]^{-p} \biggl(\int_{\eta_{k-1}}^xv_1^{-p'}(t)\bigl|G^{(\delta)}_{1,k}(t)\bigr|^{p'-1}\,dt\biggr)^p\,dx\lesssim \mathbb{A}_1^p\int_{\eta_{k-1}}^{\eta_k}
v_1^{-p'}\bigl|G_{1,k}^{(\delta)}\bigr|^{p'},
\end{align*} 
where 
\begin{equation*}
\mathbb{A}_1:=\sup_{\eta_{k-1}<t<\eta_k}
\biggl(\int_t^{\eta_{k}}  v_1^{-p'}(x)
\bigl[V_1^-(x)\bigr]^{-p}\,dx\biggr)^{1/p}\biggl(\int_{\eta_{k-1}}^t v_1^{-p'}\biggr)^{1/p'}.
\end{equation*} 
It holds
\begin{align*}\label{A1}
\mathbb{A}_1^p\le&\sup_{\eta_{k-1}<t<\eta_k}
\biggl(\int_t^{\eta_{k}}  v_1^{-p'}(x)\biggl(\int_{\eta_{k-1}}^x v_1^{-p'}\biggr)^{-p}\,dx\biggr)\biggl(\int_{\eta_{k-1}}^t v_1^{-p'}\biggr)^{p-1}\\
=&\frac{1}{p-1}\sup_{\eta_{k-1}<t<\eta_k}
\biggl[\biggl(\int_{\eta_{k-1}}^t v_1^{-p'}\biggr)^{1-p}- \biggl(\int_{\eta_{k-1}}^{\eta_k} v_1^{-p'}\biggr)^{1-p}\biggr]
\biggl(\int_{\eta_{k-1}}^t v_1^{-p'}\biggr)^{p-1}
\le\frac{1}{p-1}.
\end{align*} 
Therefore,
\begin{align*}
\sum_{|k|\le N}\int_{\eta_{k-1}}^{\eta_k} \frac{v_1^{-p'}(x)}{
\bigl[V_1^-(x)\bigr]^{p}} \biggl(\int_{\eta_{k-1}}^xv_1^{-p'}(t)\bigl|G^{(\delta)}_{1,k}(t)\bigr|^{p'-1}\,dt\biggr)^p\,dx\lesssim\int_{\eta_{k-1}}^{\eta_k}
v_1^{-p'}\bigl|G_{1,k}^{(\delta)}\bigr|^{p'}=
\bigl[\mathbf{G}^{(\delta)}_{1,N}(g)\bigr]^{p'-1},
\end{align*} 
that is
\begin{equation*}\label{8''}
\|[{F}_{1,N}^{(\delta)}]'v_1\|_p
\lesssim
\bigl[\mathbf{G}^{(\delta)}_{1,N}(g)\bigr]^{p'-1},
\end{equation*} 
and, by letting $N\to\infty$, the estimate
\begin{align}\label{FH}
\|g\|_{\mathscr{W}_{p',1/{v_1}}}^{p'}\gtrsim 
\sum_{k\in\mathbb{Z}}\biggl\{\int_{\eta_{k-1}}^{\eta_k}
v_1^{-p'}(t)\bigl|G_{1,k}^{(0)}(t)\bigr|^{p'}\,dt+\int_{\eta_{k-1}}^{\eta_k}
v_1^{-p'}(t)\bigl|G_{1,k}^{(1)}(t)\bigr|^{p'}\,dt\biggr\}
\end{align}
 is now performed, basing on \eqref{1} and \eqref{0}.
 
Similarly, in view of $V_1^-(x)=\frac{1}{2}V_1(x)\ge 
\frac{1}{2}V_1^+(a(x))=\frac{1}{4}V_1(a(x))$ (for $\delta=1$),
\begin{equation*}\label{8'}
\|[{F}_{2,N}^{(\delta)}]'v_1\|_p
\le\begin{cases} I_2+II_2, &\delta=0,\\
I_2+\bigl[{G}^{(2)}_{1,N}(g)\bigr]^{p'-1}, &\delta=1,\end{cases}
\end{equation*} 
where
\begin{align*}
I_2^p:=\sum_{|k|\le N}\int_{\eta_{k}}^{\eta_{k+1}} v_1^{p}(x)
\frac{\Bigl|\bigl[V_1^-(x)\bigr]'\Bigr|^p}{\bigl[V_1^-(x)\bigr]^{2p}}
\biggl(\int_{a(x)}^{\eta_{k}} v_1^{-p'}(t)\biggl(\int_{a(x)}^t v_1^{-p'}\biggr)^{1-\delta}
[V_1(t)]^{\delta}\bigl|G^{(\delta)}_{2,k}(t)\bigr|^{p'-1}\,dt\biggr)^p\,dx
\end{align*} 
and 
\begin{equation*}
II_2^p:=\sum_{|k|\le N}\int_{\eta_{k}}^{\eta_{k+1}} \frac{v_1^{p}(x)}{\bigl[V_1^-(x)\bigr]^{p}}\bigl[v_1^{-p'}(a(x))\,a'(x)\bigr]^p \biggl(\int_{a(x)}^{\eta_{k}} 
v_1^{-p'}\bigl|G^{(0)}_{2,k}\bigr|^{p'-1}\biggr)^p\,dx,
\end{equation*} 
we obtain analogously to the previous case (see also \eqref{Gk2} for $\delta=1$):
\begin{align*}
I_2^p\le&\sum_{|k|\le N}\int_{\eta_{k}}^{\eta_{k+1}} v_1^{p}(x)
\frac{\bigl|v_1^{-p'}(x)-v_1^{-p'}(a(x))a'(x)\bigr|^p}{\bigl[V_1^-(x)\bigr]^{p}} \biggl(\int_{a(x)}^{\eta_{k}} v_1^{-p'}\bigl|G^{(\delta)}_{2,k}\bigr|^{p'-1}\biggr)^p\,dx\\
\lesssim& \sum_{|k|\le N}\int_{\eta_{k}}^{\eta_{k+1}} v_1^{-p'}(x)
\bigl[V_1^-(x)\bigr]^{-p} \biggl(\int_{a(x)}^{\eta_{k}} v_1^{-p'}\bigl|G^{(\delta)}_{2,k}\bigr|^{p'-1}\biggr)^p\,dx
\end{align*} 
and 
\begin{equation*}
II_2^p\lesssim\sum_{|k|\le N}\int_{\eta_{k}}^{\eta_{k+1}} v_1^{-p'}(x)\bigl[V_1^-(x)\bigr]^{-p} \biggl(\int_{a(x)}^{\eta_{k}} v_1^{-p'}(t)\bigl|G^{(0)}_{2,k}(t)\bigr|^{p'-1}\,dt\biggr)^p\,dx.
\end{equation*} 
By characteristics for the Hardy inequality \cite[p. 6]{KPS},
\begin{align*}
\int_{\eta_{k}}^{\eta_{k+1}} v_1^{-p'}(x)\bigl[V_1^-(x)\bigr]^{-p} \biggl(\int_{a(x)}^{\eta_{k}} v_1^{-p'}(t)\bigl|G^{(0)}_{2,k}(t)\bigr|^{p'-1}\,dt\biggr)^p\,dx
\lesssim \mathbb{A}_2^p\int_{\eta_{k-1}}^{\eta_k}
v_1^{-p'}\bigl|G_{2,k}^{(\delta)}\bigr|^{p'},
\end{align*} 
where 
\begin{equation*}
\mathbb{A}_2:=\sup_{\eta_{k-1}<t<\eta_k}
\biggl(\int_{\eta_{k}}^{a^{-1}(t)}  v_1^{-p'}(x)
\bigl[V_1^-(x)\bigr]^{-p}\,dx\biggr)^{1/p}\biggl(\int_t^{\eta_{k}} v_1^{-p'}\biggr)^{1/p'}.
\end{equation*} 
We have
\begin{align*}
\mathbb{A}_2^p\le&\sup_{\eta_{k-1}<t<\eta_k}
\biggl(\int_{\eta_{k}}^{a^{-1}(t)}  v_1^{-p'}(x)\biggl(\int_{t}^x v_1^{-p'}\biggr)^{-p}\,dx\biggr)\biggl(\int_t^{\eta_{k}} v_1^{-p'}\biggr)^{p-1}\\
=&\frac{1}{p-1}\sup_{\eta_{k-1}<t<\eta_k}
\biggl[\biggl(\int_t^{\eta_{k}} v_1^{-p'}\biggr)^{1-p}- \biggl(\int_t^{a^{-1}(t)} v_1^{-p'}\biggr)^{1-p}\biggr]
\biggl(\int_t^{\eta_{k}} v_1^{-p'}\biggr)^{p-1}
\le\frac{1}{p-1}.
\end{align*} 
Therefore,
\begin{align*}
\sum_{|k|\le N}\int_{\eta_{k}}^{\eta_{k+1}} \frac{v_1^{-p'}(x)}{
\bigl[V_1^-(x)\bigr]^{p}} \biggl(\int_{a(x)}^{\eta_{k}}v_1^{-p'}(t)\bigl|G^{(\delta)}_{2,k}(t)\bigr|^{p'-1}\,dt\biggr)^p\,dx\lesssim\int_{\eta_{k-1}}^{\eta_k}
v_1^{-p'}\bigl|G_{2,k}^{(\delta)}\bigr|^{p'}=
\bigl[\mathbf{G}^{(\delta)}_{2,N}(g)\bigr]^{p'-1},
\end{align*} 
which, in combination with \eqref{11} and \eqref{0}, yields the estimate 
$$
\|g\|_{\mathscr{W}_{p',1/{v_1}}}^{p'}\gtrsim 
\sum_{k\in\mathbb{Z}}\biggl\{\int_{\eta_{k-1}}^{\eta_k}
v_1^{-p'}(t)\bigl|G_{2,k}^{(0)}(t)\bigr|^{p'}\,dt+\int_{\eta_{k-1}}^{\eta_k}
v_1^{-p'}(t)\bigl|G_{2,k}^{(1)}(t)\bigr|^{p'}\,dt\biggr\},
$$ 
by letting $N\to\infty$. Thus, (see also \eqref{FH}) the required lower bound is now confirmed.
\end{proof}

Basing on Lemma \ref{norm} one can prove the following 

\begin{lem}\label{plot}Let $1<p<\infty,$  $v_0,v_1\in {\mathscr V}_p(0,\infty)$, $\frac{1}{v_1}\in L^{p'}_\loc(0,\infty)$ and the condition \eqref{S6} is satisfied.
Then the space $\mathbb{W}_{p',1/{v_1}}$ is dense in $\mathscr{W}_{p',1/{v_1}}.$
\end{lem}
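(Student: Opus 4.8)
The plan is to approximate an arbitrary $g\in\mathscr{W}_{p',1/{v_1}}$ by its truncations along the grid $\{\eta_k\}_{k\in\mathbb{Z}}$ and to control the error by the tail of the convergent series \eqref{qq1} supplied by Lemma \ref{norm}. Before doing so I would record the elementary inclusion $\mathbb{W}_{p',1/{v_1}}\subseteq\mathscr{W}_{p',1/{v_1}}$: since $|\int_0^\infty fg|\le\int_0^\infty|fg|$ we have $J_{\wcnn_p^1}(g)\le\mathbf{J}_{\wcnn_p^1}(g)$, so Theorem \ref{T1} gives $\mathbb{G}(g)+\mathcal{G}(g)\lesssim\mathsf{G}(g)$. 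This makes the density statement meaningful, as $\mathbb{W}_{p',1/{v_1}}$ is then genuinely a normed subspace of $\mathscr{W}_{p',1/{v_1}}$.

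For $N\in\mathbb{N}$ I would set $g_N:=g\chi_{[\eta_{-N},\eta_N]}$ and first check that $g_N\in\mathbb{W}_{p',1/{v_1}}$. Since $g_N$ is supported in the compact interval $[\eta_{-N},\eta_N]\subset(0,\infty)$, the inner integral $\int_t^{a^{-1}(t)}|g_N|$ vanishes unless $t\in(\eta_{-N-1},\eta_N)$, and on that interval it is bounded by $\int_{\eta_{-N}}^{\eta_N}|g|<\infty$ (because $g\in L^1_\loc$); together with $v_1^{-1}\in L^{p'}_\loc$ this yields $\mathsf{G}(g_N)^{p'}\le\bigl(\int_{\eta_{-N}}^{\eta_N}|g|\bigr)^{p'}\int_{\eta_{-N-1}}^{\eta_N}v_1^{-p'}<\infty$, so indeed $g_N\in\mathbb{W}_{p',1/{v_1}}\subseteq\mathscr{W}_{p',1/{v_1}}$.

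The heart of the convergence argument is the \emph{locality} of the building blocks of \eqref{qq1} with respect to the grid: for $t\in[\eta_{k-1},\eta_k]$ the quantity $G_{1,k}^{(\delta)}(t)$ depends only on $g|_{[\eta_{k-1},\eta_k]}$ (its inner integral runs from $t$ to $\eta_k$), while $G_{2,k}^{(\delta)}(t)$ depends only on $g|_{[\eta_k,\eta_{k+1}]}$ (its inner integral runs from $\eta_k$ to $a^{-1}(t)\in[\eta_k,\eta_{k+1}]$). Applying \eqref{qq1} to $h:=g-g_N$, which equals $g$ off $[\eta_{-N},\eta_N]$ and vanishes on it, locality forces $G_{1,k}^{(\delta)}(h)=G_{1,k}^{(\delta)}(g)$ for $k\le -N$ or $k\ge N+1$ and $G_{1,k}^{(\delta)}(h)=0$ otherwise, and $G_{2,k}^{(\delta)}(h)=G_{2,k}^{(\delta)}(g)$ for $k\le -N-1$ or $k\ge N$ and $G_{2,k}^{(\delta)}(h)=0$ otherwise. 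Hence $\|g-g_N\|_{\mathscr{W}_{p',1/{v_1}}}^{p'}$ is bounded, up to the constant of \eqref{qq1}, by the tail of the convergent sum $\|g\|_{\mathscr{W}_{p',1/{v_1}}}^{p'}$ over the indices $|k|\gtrsim N$, which tends to $0$ as $N\to\infty$. This proves $\|g-g_N\|_{\mathscr{W}_{p',1/{v_1}}}\to0$ and hence the density.

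The two verifications above are routine; the one point requiring care — and which I regard as the main obstacle — is the index bookkeeping, namely matching precisely the ranges of $k$ on which $G_{1,k}^{(\delta)}(h)$ and $G_{2,k}^{(\delta)}(h)$ reproduce the terms for $g$ and on which they vanish. The cut at the grid points $\eta_{\pm N}$ is chosen exactly so that every grid cell lies entirely inside or entirely outside $[\eta_{-N},\eta_N]$; because $G_{1,k}^{(\delta)}$ sees only the cell $[\eta_{k-1},\eta_k]$ and $G_{2,k}^{(\delta)}$ only the cell $[\eta_k,\eta_{k+1}]$, no term mixes a truncated cell with a surviving one, and the error collapses into a clean tail. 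Had the truncation been placed at an interior point of a cell these terms would not decouple, which is why the grid-aligned truncation is essential.
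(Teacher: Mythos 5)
Your proposal is correct and follows essentially the same route as the paper: truncation of $g$ at the grid points $\eta_{\pm N}$, verification that $g_N=g\chi_{[\eta_{-N},\eta_N]}$ lies in $\mathbb{W}_{p',1/{v_1}}$, and control of $\|g-g_N\|_{\mathscr{W}_{p',1/{v_1}}}$ by the tail of the convergent series in \eqref{qq1} via the locality of $G_{1,k}^{(\delta)}$ and $G_{2,k}^{(\delta)}$ with respect to the cells $[\eta_{k-1},\eta_k]$ and $[\eta_k,\eta_{k+1}]$. Your index bookkeeping for which terms survive in $g-g_N$ matches the paper's, and the preliminary observation that $\mathbb{W}_{p',1/{v_1}}\subseteq\mathscr{W}_{p',1/{v_1}}$ is a harmless (and correct) addition.
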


\begin{proof} 
Let $g\in \mathscr{W}_{p',1/{v_1}}$. Then $\|g\|_{\mathscr{W}_{p',1/{v_1}}}<\infty$ by \eqref{qq1}.
Therefore,
\begin{align}\label{r1}
\lim_{n\to\infty}\sum_{|k|\ge n}\biggl\{\int_{\eta_{k-1}}^{\eta_k}
v_1^{-p'}(t)\bigl|G_{1,k}^{(0)}(t)\bigr|^{p'}\,dt+\int_{\eta_{k-1}}^{\eta_k}
v_1^{-p'}(t)\bigl|G_{2,k}^{(0)}(t)
\bigr|^{p'}\,dt\nonumber\\
+\int_{\eta_{k-1}}^{\eta_k}
v_1^{-p'}(t)\bigl|G_{1,k}^{(1)}(t)\bigr|^{p'}\,dt+\int_{\eta_{k-1}}^{\eta_k}
v_1^{-p'}(t)\bigl|G_{2,k}^{(1)}(t)
\bigr|^{p'}\,dt\biggr\}=0.
\end{align}

Let $g_N:=\chi_{[\eta_{-N},\eta_N]}g$ with some $N\in\mathbb{N}$. Then $g_N\in \mathbb{W}_{p',1/{v_1}}$. Indeed, 
\begin{equation*}
G(|g_N|)^{p'}=\biggl\{\int_0^{\eta_{-N-1}}+\int_{\eta_{-N-1}}^{\eta_{N}}+\int_{\xi_{N}}^{\infty}\biggr\}v_1^{-p'}(x)\biggl(\int_x^{a^{-1}(x)}|g_N|\biggr)^{p'}dx, 
\end{equation*} 
where
\begin{align*}
\int_0^{\eta_{-N-1}}v_1^{-p'}(x)\biggl(\int_x^{a^{-1}(x)}|\chi_{[\eta_{-N},\eta_N]}g|\biggr)^{p'}dx=0=
\int_{\eta_{N}}^\infty v_1^{-p'}(x)\biggl(\int_x^{a^{-1}(x)}|\chi_{[\eta_{-N},\eta_N]}g|\biggr)^{p'}dx.
\end{align*} 
The assertion follows from the fact that
\begin{equation*}
\int_{\eta_{-N-1}}^{\eta_{N}}v_1^{-p'}(x)\biggl(\int_x^{a^{-1}(x)}|\chi_{[\eta_{-N},\eta_N]}g|\biggr)^{p'}dx\le
\int_{\eta_{-N-1}}^{\eta_{N}}v_1^{-p'} \biggl(\int_{\eta_{-N-1}}^{\eta_{N+1}}|g|\biggr)^{p'}<\infty.
\end{equation*} 

\smallskip
Denote $G_{i,k}^{(\delta)}(t)=:H_{i,k}^{(\delta)}g(t)$, $i=1,2$.  We can write
\begin{align*}
\|g-g_N\|_{\mathscr{W}_{p',1/{v_1}}}^{p'}=&
\sum_{i=1,2}\sum_{\delta=1,2}\sum_{k\in\mathbb{Z}}\int_{\eta_{k-1}}^{\eta_k}
v_1^{-p'}(t)\bigl|H_{i,k}^{(\delta)}g(t)-H_{i,k}^{(\delta)}g_N(t)\bigr|^{p'}\,dt
\\=&\sum_{i=1,2}\sum_{\delta=1,2}\sum_{k\in\mathbb{Z}}\int_{\eta_{k-1}}^{\eta_k}
v_1^{-p'}(t)\bigl|H_{i,k}^{(\delta)}(\chi_{(0,\eta_{-N})}g)(t)+H_{i,k}^{(\delta)}(\chi_{(\eta_{N},\infty)}g)(t)\bigr|^{p'}\,dt
\\=&\sum_{i=1,2}\sum_{\delta=1,2}\sum_{k\le -N-1}\int_{\eta_{k-1}}^{\eta_k}
v_1^{-p'}\bigl|G_{i,k}^{(\delta)}\bigr|^{p'}+\sum_{\delta=1,2}\int_{\eta_{-N-1}}^{\eta_{-N}}
v_1^{-p'}\bigl|G_{1,N}^{(\delta)}\bigr|^{p'}\\&+
\sum_{\delta=1,2}\int_{\eta_{N-1}}^{\eta_{N}}
v_1^{-p'}\bigl|G_{2,N}^{(\delta)}\bigr|^{p'}+\sum_{i=1,2}\sum_{\delta=1,2}\sum_{k\ge N+1}\int_{\eta_{k-1}}^{\eta_k}
v_1^{-p'}\bigl|G_{i,k}^{(\delta)}\bigr|^{p'}\\ 
\le&\sum_{i=1,2}\sum_{\delta=1,2}\sum_{|k|\ge N}\int_{\eta_{k-1}}^{\eta_k}
v_1^{-p'}(t)\bigl|G_{i,k}^{(\delta)}(t)\bigr|^{p'}\,dt.
\end{align*} 
This approves the statement of Lemma in view of \eqref{r1}. 
\end{proof} 

Now we can make an addition to the last assertion of Theorem \ref{T1}.
\begin{rem}
Let $X=W_p^1$. Then, by Theorem \ref{T1},
$$X^\prime_w=\Bigl\{g\in \mathbb{W}_{p',1/{v_1}}: \|g\|_{X'_w}\approx\|g\|_{\mathscr{W}_{p',1/{v_1}}}<\infty\Bigr\}.$$
It follows from here that $X'_w\subseteq \mathscr{W}_{p',1/{v_1}}$, and the inclusion can be strict, since there are examples of $g_0\in\mathscr{W}_{p',1/{v_1}}$ when 
$g_0\not\in\mathbb{W}_{p',1/{v_1}}$ (see \cite[Remark 5.5]{PSU0}).

Indeed, if $g_0\in X'_w$ then, by \cite[Theorem 2.5]{PSU0}, 
$$\|g_0\|_{X'_w}=J_X(g_0)<\infty\quad \Longleftrightarrow \quad \infty>\mathbf{J}_X(g_0)=\|g_0\|_{\mathbb{W}_{p',1/{v_1}}}=\infty,$$ which is a contradiction.

Let \begin{multline*}X'_{\textrm{ext}}:=\bigl\{g\in\mathscr{W}_{p',1/{v_1}}\colon \textrm{ there exists } \{g_k\}\subset X'_w \textrm{ such that }  \\
\lim_{k\to\infty}\|g-g_k\|_{\mathscr{W}_{p',1/{v_1}}}=0 \textrm{ and } \|g\|_{X'_{\textrm{ext}}}:=\lim_{k\to\infty}\|g_k\|_{X'_w}\bigr\}.\end{multline*}
Notice that the definition of $X'_{\textrm{ext}}$ is independent of a choice of $\{g_k\}$. Then
$$
X'_{\textrm{ext}}\hookrightarrow\mathscr{W}_{p',1/{v_1}} \textrm { and } \|g\|_{\mathscr{W}_{p',1/{v_1}}}\le \|g\|_{X'_{\textrm{ext}}}.$$

Conversely, let $g\in\mathscr{W}_{p',1/{v_1}}$. Then, by Lemma \ref{plot}, there exists $\{g_k\}\subset \mathbb{W}_{p',1/{v_1}} \subset X'_w$ such that 
$\|g\|_{\mathscr{W}_{p',1/{v_1}}}=\lim_{k\to\infty}\|g_k\|_{\mathscr{W}_{p',1/{v_1}}}=\|g\|_{X'_{\textrm{ext}}}$. Hence,
$g\in X'_{\textrm{ext}}$ and we have $\mathscr{W}_{p',1/{v_1}}\subset X'_{\textrm{ext}}$ and $\|g\|_{X'_{\textrm{ext}}}=\|g\|_{\mathscr{W}_{p',1/{v_1}}}$. Thus, 
$$
X'_{\textrm{ext}}=\mathscr{W}_{p',1/{v_1}}
$$
with equality of the norms.
\end{rem}

The next technical statement is used in Corollary \ref{corol} to prove $[X'_w]'_s=\{0\}$. 

\begin{lem}\label{lemSW}
Let $1<p<\infty$, $[c,d]\subset (0,\infty)$ and $h\in L^1([c,d])$. Then for any $\varepsilon>0$ there exists $g\in \mathbb{W}_{p',1/{v_1}}$ such that
$|g|=|h|$ on $[c,d]$ and $\|g\|_{\mathscr{W}_{p',1/{v_1}}}<\varepsilon$.
\end{lem}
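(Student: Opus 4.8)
The plan is to produce $g$ by keeping $|g|=|h|$ on $[c,d]$, setting $g=0$ off $[c,d]$, and choosing only the \emph{sign} of $g$ on $[c,d]$ so as to force cancellation. The cleanest route is to pass from $\|\cdot\|_{\mathscr{W}_{p',1/{v_1}}}$ to its dual description: by \eqref{UB} of Theorem \ref{T1} one has $\|g\|_{\mathscr{W}_{p',1/{v_1}}}=\mathbb{G}(g)+\mathcal{G}(g)\approx J_{\wcnn_p^1}(g)=\sup_{0\ne f\in\wcnn_p^1}|\int_0^\infty fg|/\|f\|_{\wcnn_p^1}$, so it suffices to make the pairing $\int_0^\infty fg$ uniformly small against all admissible test functions $f$. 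Since any $f\in\wcnn_p^1$ is absolutely continuous with $\|f'v_1\|_p\le\|f\|_{\wcnn_p^1}$, on the compact interval $[c,d]$ it is ``slowly varying'': $|f(x)-f(y)|\le\|f'v_1\|_p\,\|v_1^{-1}\|_{L^{p'}([x,y])}$, while a rapidly oscillating sign attached to $|h|$ integrates against such an $f$ with strong cancellation.

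Concretely, assume $M:=\|h\|_{L^1([c,d])}>0$ (otherwise $g=0$ works). For $n\in\mathbb{N}$ choose points $c=x_0<x_1<\dots<x_{2n}=d$ with $\int_{x_{j-1}}^{x_j}|h|=M/(2n)$ for every $j$ (possible by continuity of $x\mapsto\int_c^x|h|$ and the intermediate value theorem), and set $g_n:=\bigl(\sum_{j=1}^{2n}(-1)^j\chi_{(x_{j-1},x_j)}\bigr)|h|$, extended by $0$ outside $[c,d]$; then $|g_n|=|h|$ on $[c,d]$. First I would check $g_n\in\mathbb{W}_{p',1/{v_1}}$: the inner integral in $\mathsf{G}(g_n)$ is bounded by $\int_c^d|h|=M$ and is nonzero only for $t$ in the compact interval $(a(c),d)\subset(0,\infty)$, over which $\int v_1^{-p'}<\infty$ because $1/v_1\in L^{p'}_\loc$; hence $\mathsf{G}(g_n)<\infty$.

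The heart of the argument is the uniform bound on the pairing. Grouping the $2n$ subintervals into the $n$ consecutive pairs $P_i:=(x_{2i-2},x_{2i})$ and using that the two halves of $P_i$ carry equal $|h|$-mass $M/(2n)$, I would subtract the constant $f(x_{2i-1})$ on each pair to obtain $|\int_{P_i}fg_n|\le\frac{M}{n}\,\|f'v_1\|_p\,\|v_1^{-1}\|_{L^{p'}(P_i)}$. Summing over $i$ and applying H\"older's inequality for sums (exponents $p,p'$) together with $\sum_i\|v_1^{-1}\|_{L^{p'}(P_i)}^{p'}=\|v_1^{-1}\|_{L^{p'}([c,d])}^{p'}$ yields $|\int_0^\infty fg_n|\le M\,n^{1/p-1}\,\|v_1^{-1}\|_{L^{p'}([c,d])}\,\|f\|_{\wcnn_p^1}$, so that $J_{\wcnn_p^1}(g_n)\lesssim M\,n^{-1/p'}\,\|v_1^{-1}\|_{L^{p'}([c,d])}\to0$ as $n\to\infty$ since $p>1$. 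By \eqref{UB} the same decay holds for $\|g_n\|_{\mathscr{W}_{p',1/{v_1}}}$, and choosing $n$ large makes this $<\varepsilon$; take $g=g_n$. The main obstacle is precisely this last estimate: extracting genuine decay in $n$ (rather than a bound that merely stays finite) requires the equal-mass partition to kill the constant part of $f$ on each pair and the H\"older summation to produce the favorable exponent $n^{1/p-1}$, which is negative exactly because $p>1$.
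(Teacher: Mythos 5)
Your argument is correct, but it takes a genuinely different route from the paper. The paper never passes to the dual description: it first proves a pointwise estimate for compactly supported $g$, namely $\|g\|_{\mathscr{W}_{p',1/{v_1}}}^{p'}\lesssim [V_1(c)]^{p'+1}\bigl|\int_c^d g/V_1\bigr|^{p'}+\int_c^d v_1^{-p'}V_1^{p'}\bigl|\int_t^d g/V_1\bigr|^{p'}\,dt$ by direct manipulation of the functionals $\mathbb{G}$ and $\mathcal{G}$ (using the equilibrium identity \eqref{eq} and the substitution $y=a^{-1}(t)$), and then builds an oscillating function whose sign changes make the \emph{weighted} primitives $\int_{\alpha_i}^{\alpha_{i+1}}\tilde g/V_1$ vanish, which localizes $\int_t^d\tilde g/V_1$ to a single cell and yields decay of order $n^{-1}$. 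You instead invoke the equivalence \eqref{UB}, $\|g\|_{\mathscr{W}_{p',1/{v_1}}}\approx J_{\wcnn_p^1}(g)$, and beat down the pairing against every test function $f$ by exploiting that $f$ is slowly varying ($|f(x)-f(y)|\le\|f'v_1\|_p\|v_1^{-1}\|_{L^{p'}([x,y])}$) while your $g_n$ oscillates with equal $|h|$-mass on each half-cell; the H\"older summation gives the weaker but still sufficient rate $n^{-1/p'}$. Both proofs share the same engine (equal-mass partition plus alternating signs), but the cancellation lives in different places: in $\int\tilde g/V_1$ for the paper, in $\int fg_n$ for you. Your version has two advantages: it avoids the paper's direct computations with $\mathbb{G},\mathcal{G}$ entirely, and your $g_n$ satisfies $|g_n|=|h|$ exactly as the lemma demands, whereas the paper's $\tilde g=V_1|h|\sum_i(\chi_{[\alpha_i,\beta_i]}-\chi_{(\beta_i,\alpha_{i+1})})$ actually has $|\tilde g|=V_1|h|$ (a slip there, easily repaired by partitioning with respect to the measure $|h|/V_1$). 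The price you pay is reliance on the nontrivial duality theorem \eqref{UB}, where the paper's route is self-contained modulo the standard Hardy-type estimates; also note that the paper's sharper intermediate inequality \eqref{comp} is reused verbatim in the proof of Theorem \ref{theoremMain}, so it cannot simply be discarded from the paper even if your proof of the lemma is adopted.
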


\begin{proof}
Firstly, we show that for $g$ with ${\rm supp}\,g\in[c,d]$ it holds
\begin{equation}\label{comp}
\|g\|_{\mathscr{W}_{p',1/{v_1}}}^{p'}\lesssim
[V_1(c)]^{p'+1}\biggl|\int_c^d \frac{g}{V_1}\biggr|^{p'}+
\int_{c}^{d}v_1^{-p'}(t)V_1^{p'}(t)\biggl|\int_t^d \frac{g}{V_1}\biggr|^{p'}\,dt.
\end{equation}
We start from the functional $\mathcal{G}(g)$, for which it holds, by the triangle inequality, that
\begin{align*}
\mathcal{G}(g\chi_{[c,d]})\le&\biggl(\int_{a(c)}^d v_1^{-p'}(t)\,V_1^{p'}(t)\, \biggl|\int_t^{a^{-1}(t)}\frac{\chi_{[c,d]}(x)g(x)}{V_1(x)}
\,dx\biggr|^{p'}\,dt\biggr)^{1/p'} \\\le& 
\biggl(\int_{a(c)}^d v_1^{-p'}(t)\,V_1^{p'}(t)\, \biggl|\int_t^d\frac{\chi_{[c,d]}(x)g(x)}{V_1(x)}
\,dx\biggr|^{p'}\,dt\biggr)^{1/p'} \\&+\biggl(\int_{a(c)}^{d} v_1^{-p'}(t)\,V_1^{p'}(t)\, \biggl|\int_{a^{-1}(t)}^d\frac{\chi_{[c,d]}(x)g(x)}{V_1(x)}
\,dx\biggr|^{p'}\,dt\biggr)^{1/p'}.
\end{align*} 
Since for any $\alpha>0$
\begin{equation}\label{cococo}
\int_{a(t)}^t v_1^{-p'}[V_1^+]^{\alpha}\le 
\int_{a(t)}^t v_1^{-p'}(x)\Bigl[\int_{a(t)}^{b(x)} v_1^{-p'}\Bigr]^{\alpha}\,dx\le [V_1(t)]^{\alpha+1}
,\end{equation} 
we have
\begin{align}\label{coco}&
\int_{a(c)}^d v_1^{-p'}(t)\,V_1^{p'}(t)\, \biggl|\int_t^d\frac{\chi_{[c,d]}(x)g(x)}{V_1(x)}
\,dx\biggr|^{p'}\,dt\nonumber\\&=
\int_{a(c)}^c v_1^{-p'}(t)\,V_1^{p'}(t)\, \biggl|\int_c^d\frac{g(x)}{V_1(x)}
\,dx\biggr|^{p'}\,dt+
\int_{c}^d v_1^{-p'}(t)\,V_1^{p'}(t)\, \biggl|\int_t^d\frac{g(x)}{V_1(x)}
\,dx\biggr|^{p'}\,dt\nonumber\\ &\lesssim 
[V_1(c)]^{p'+1}\biggl|\int_c^d \frac{g}{V_1}\biggr|^{p'}+
\int_{c}^{d}v_1^{-p'}(t)V_1^{p'}(t)\biggl|\int_t^d \frac{g}{V_1}\biggr|^{p'}\,dt.
\end{align}
By the substitution $y=a^{-1}(t)$ and in view of \eqref{eq} and $V_1^+(a(y))\le V_1(y)$,
\begin{multline*}
\int_{a(c)}^{d} v_1^{-p'}(t)\,V_1^{p'}(t)\, \biggl|\int_{a^{-1}(t)}^d\frac{\chi_{[c,d]}(x)g(x)}{V_1(x)}
\,dx\biggr|^{p'}\,dt=\int_{a(c)}^{a(d)} v_1^{-p'}(t)\,V_1^{p'}(t)\, \biggl|\int_{a^{-1}(t)}^d\frac{\chi_{[c,d]}(x)g(x)}{V_1(x)}
\,dx\biggr|^{p'}\,dt\\\le 
\int_{c}^{d} v_1^{-p'}(a(y))\,V_1^{p'}(a(y))a'(y)\, \biggl|\int_{y}^d\frac{\chi_{[c,d]}(x)g(x)}{V_1(x)}
\,dx\biggr|^{p'}\,dt\\\lesssim \int_{c}^{d} v_1^{-p'}(y)\,V_1^{p'}(y)\, \biggl|\int_{y}^d\frac{\chi_{[c,d]}(x)g(x)}{V_1(x)}
\,dx\biggr|^{p'}\,dt=
\int_{c}^{d} v_1^{-p'}(y)\,V_1^{p'}(y)\, \biggl|\int_{y}^d\frac{g(x)}{V_1(x)}
\,dx\biggr|^{p'}\,dt.
\end{multline*}
Therefore, the estimate \eqref{comp} for the component 
$\mathcal{G}(g\chi_{[c,d]})$ of $\|g\|_{\mathscr{W}_{p',1/{v_1}}}^{p'}$ now follows.
To prove the same for $\mathbb{G}(g\chi_{[c,d]})$ we write
\begin{align*}&
\int_0^\infty v_1^{-p'}(t)\biggl|\int_t^{a^{-1}(t)}\frac{\chi_{[c,d]}(x)g(x)}{V_1(x)}\biggl(\int_{a(x)}^tv_1^{-p'}\biggr)dx
\biggr|^{p'}\,dt\\&=
\int_{a(c)} ^dv_1^{-p'}(t)\biggl|\int_t^{a^{-1}(t)}\frac{\chi_{[c,d]}(x)g(x)}{V_1(x)}\biggl(\int_{a(x)}^tv_1^{-p'} \biggr)dx
\biggr|^{p'}\,dt
\\&=
\int_{a(c)} ^d v_1^{-p'}(t)\biggl|\int_{a(t)}^tv_1^{-p'} (y)\biggl(\int_t^{a^{-1}(y)}\frac{\chi_{[c,d]}(x)g(x)}{V_1(x)}\,dx\biggr)dy
\biggr|^{p'}\,dt.
\end{align*}
By the triangle and H\"{o}lder's inequalities,
\begin{align*}
\mathbb{G}(g\chi_{[c,d]})=&
\biggl(\int_{a(c)} ^d v_1^{-p'}(t)\biggl(\int_{a(t)}^tv_1^{-p'} (y)\biggl|\int_t^{a^{-1}(y)}\frac{\chi_{[c,d]}g}{V_1}\biggr|dy
\biggr)^{p'}\,dt\biggr)^{1/p'}\\\le&
\biggl(\int_{a(c)} ^d v_1^{-p'}(t)\biggl(\int_{a(t)}^tv_1^{-p'} (y)\biggl|\int_t^d\frac{\chi_{[c,d]}g}{V_1}\biggr|
 dy
\biggr)^{p'}\,dt\biggr)^{1/p'}\\&+
\biggl(\int_{a(c)} ^{a(d)} v_1^{-p'}(t)\biggl(\int_{a(t)}^tv_1^{-p'} (y)\biggl|\int_{a^{-1}(y)}^d\frac{\chi_{[c,d]}g}{V_1}\biggr|
 dy
\biggr)^{p'}\,dt\biggr)^{1/p'}\\\le&
\biggl(\int_{a(c)} ^d v_1^{-p'}(t)\,V_1^{p'}(t)\biggl|\int_t^d\frac{\chi_{[c,d]}g}{V_1}\biggr|
^{p'}\,dt\biggr)^{1/p'}\\&+
\biggl(\int_{a(c)} ^{a(d)} v_1^{-p'}(t)\,V_1^{p'-1}(t)\biggl(\int_{a(t)}^tv_1^{-p'} (y)\biggl|\int_{a^{-1}(y)}^d
\frac{\chi_{[c,d]}g}{V_1}\biggr|^{p'}
dy
\biggr)\,dt\biggr)^{1/p'}.
\end{align*} 
Since (see \eqref{cococo} and \eqref{eq})
\begin{align*}&
\int_{a(c)} ^{a(d)} v_1^{-p'}(t)\,V_1^{p'-1}(t)\biggl(\int_{a(t)}^tv_1^{-p'} (y)\biggl|\int_{a^{-1}(y)}^d
\frac{\chi_{[c,d]}(x)g(x)}{V_1(x)}\,dx\biggr|^{p'}
 dy
\biggr)\,dt\\&=
\int_{a(a(c))} ^{a(d)} v_1^{-p'} (y)\biggl|\int_{a^{-1}(y)}^d
\frac{\chi_{[c,d]}(x)g(x)}{V_1(x)}\,dx\biggr|^{p'}
\biggl(\int_y^{a^{-1}(y)} v_1^{-p'}(t)\,V_1^{p'-1}(t)
 dt\biggr)dy\\&\lesssim
\int_{a(c)} ^{a(d)} v_1^{-p'} (y)V_1^{p'}(a^{-1}(y))\biggl|\int_{a^{-1}(y)}^d
\frac{\chi_{[c,d]}(x)g(x)}{V_1(x)}\,dx\biggr|^{p'}\,dy\lesssim
\int_{c} ^{d} v_1^{-p'} (t)V_1^{p'}(t)\biggl|\int_{t}^d
\frac{g(x)}{V_1(x)}\,dx\biggr|^{p'}\,dy
\end{align*} 
the estimate \eqref{comp} for 
$\mathbb{G}(g\chi_{[c,d]})$ follows by taking into account \eqref{coco}.

\smallskip
Secondly, we fix $\varepsilon>0$ and take 
$$
n>\varepsilon^{-1}\Bigl(\int_c^d v_1^{-p'}\,V_1^{p'}\Bigr)^{1/p'}\int_c^d|h|.
$$ 
Let $\{\alpha_i\}_{i=0}^n$ be a partition of $[c,d]$ such that $\int_{\alpha_i}^{\alpha_{i+1}}|h|=n^{-1
}\int_c^d|h|$ and suppose $\beta_i\in[\alpha_i,\alpha_{i+1}]$ are such that $\int_{\alpha_i}^{\beta_{i}}|h|=\int_{\beta_i}^{\alpha_{i+1}}|h|$, $i\in\{0,\ldots,n-1\}$. Put
$$
\tilde{g}:=V_1|h|\sum_{i=0}^{n-1}\bigl(\chi_{[\alpha_i,\beta_i]}-
\chi_{(\beta_i,\alpha_{i+1)}}\bigr).
$$
Then $\tilde{g}\in \mathbb{W}_{p',1/{v_1}}$, $|\tilde{g}|=|h|$ on $[c,d]$, $\int_{\alpha_i}^{\alpha_{i+1}}\frac{\tilde{g}}{V_1}=0$ for $i=0,\ldots,n-1$ and (see \eqref{comp})
\begin{align*}
\|\tilde{g}\|_{\mathscr{W}_{p',1/{v_1}}}^{p'}\lesssim&
\int_{c}^{d}v_1^{-p'}(x)V_1^{p'}(x)\biggl|\int_x^d \frac{\tilde{g}}{V_1}\biggr|^{p'}\,dx
=\sum_{i=0}^{n-1}\int_{\alpha_i}^{\alpha_{i+1}}v_1^{-p'}(x)V_1^{p'}(x)\biggl|\int_x^{\alpha_{i+1}} \frac{\tilde{g}}{V_1}\biggr|^{p'}\,dx\\
=&
\sum_{i=0}^{n-1}\int_{\alpha_i}^{\alpha_{i+1}}v_1^{-p'}(x)V_1^{p'}(x)\biggl|\int_{\alpha_{i}}^x \frac{\tilde{g}}{V_1}\biggr|^{p'}\,dx
\le \sum_{i=0}^{n-1}\biggl(\int_{\alpha_{i}}^{\alpha_{i+1}} |h|\biggr)^{p'}\int_{\alpha_i}^{\alpha_{i+1}}v_1^{-p'}V_1^{p'}\\=&
n^{-p'}\biggl(\int_{c}^{d} |h|\biggr)^{p'}\sum_{i=0}^{n-1}\int_{\alpha_i}^{\alpha_{i+1}}v_1^{-p'}V_1^{p'}=
n^{-p'}\biggl(\int_{c}^{d} |h|\biggr)^{p'}\int_{c}^{d}v_1^{-p'}V_1^{p'}<\varepsilon^{p'}.
\end{align*}
\end{proof}

\begin{cor}\label{corol}
Let $f\in\mathfrak{M}(0,\infty)$. If ${\rm meas}\,\{x\in (0,\infty)\colon f(x)\not=0\}>0$ then $\mathbf{J}_{\mathscr{W}_{p',1/{v_1}}}(f)=\infty$.
\end{cor}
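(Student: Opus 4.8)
The plan is to let Lemma \ref{lemSW} do all the analytic work: it is precisely designed to make the $\mathscr{W}_{p',1/{v_1}}$-norm of a test function arbitrarily small while keeping its modulus prescribed on a fixed compact interval, and the corollary is then a short reduction. First I would unwind the definition of the strong associate functional: by \eqref{D-X} and the definition of $\mathbf{J}$,
\[
\mathbf{J}_{\mathscr{W}_{p',1/{v_1}}}(f)=\sup_{0\not=g\in\mathscr{W}_{p',1/{v_1}}}\frac{\int_0^\infty|gf|}{\|g\|_{\mathscr{W}_{p',1/{v_1}}}},
\]
so it suffices to produce, for every $\varepsilon>0$, one admissible $g$ whose ratio exceeds $c\,\varepsilon^{-1}$ for some fixed positive constant $c$ independent of $\varepsilon$.

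Next I would localize $f$. Since the set $\{x\in(0,\infty):f(x)\not=0\}$ has positive measure, I can choose a compact interval $[c,d]\subset(0,\infty)$, a level $\delta>0$, and a measurable set $A\subseteq[c,d]$ with ${\rm meas}\,A>0$ on which $|f|\ge\delta$; this is a routine exhaustion, since the sets $\{x\in[1/n,n]:|f(x)|\ge 1/n\}$ increase to a set of positive measure. I would then apply Lemma \ref{lemSW} with the datum $h:=\chi_A\in L^1([c,d])$: for the given $\varepsilon$ it yields $g\in\mathbb{W}_{p',1/{v_1}}\subset\mathscr{W}_{p',1/{v_1}}$ with $|g|=\chi_A$ on $[c,d]$ and $\|g\|_{\mathscr{W}_{p',1/{v_1}}}<\varepsilon$; in particular $g\not=0$, so it is an admissible competitor in the supremum.

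Finally I would bound the pairing from below. Because the integrand is nonnegative and $|g|=\chi_A$ on $[c,d]$,
\[
\int_0^\infty|gf|\ge\int_{[c,d]}|g|\,|f|=\int_A|f|\ge\delta\,{\rm meas}\,A,
\]
so the ratio for this $g$ is at least $\delta\,{\rm meas}\,A/\varepsilon$. Since $\varepsilon>0$ is arbitrary and $\delta\,{\rm meas}\,A$ is a fixed positive number, letting $\varepsilon\to 0$ forces $\mathbf{J}_{\mathscr{W}_{p',1/{v_1}}}(f)=\infty$, which is the assertion (and gives $[X'_w]'_s=\{0\}$ for $X=\wcnn_p^1$).

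I do not expect a genuine obstacle at the level of the corollary, as the delicate estimate \eqref{comp} and the oscillating construction of the test function are already absorbed into Lemma \ref{lemSW}. The only points demanding a little care are the localization step, where one must extract from a merely measurable $f$ a genuine $L^1$ datum $h$ with $\int|hf|$ bounded away from zero, and the observation that whatever $g$ does outside $[c,d]$ can only increase $\int_0^\infty|gf|$ (nonnegative integrand), so we never need to control $g$ there.
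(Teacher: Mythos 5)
Your proposal is correct and follows essentially the same route as the paper: both reduce the corollary to Lemma \ref{lemSW} by choosing an interval $[c,d]$ on which $f$ is nonzero on a set of positive measure, producing a competitor $g$ with $\|g\|_{\mathscr{W}_{p',1/{v_1}}}<\varepsilon$ and prescribed modulus there, and letting $\varepsilon\to 0$. The only (immaterial) difference is that the paper applies the lemma with $h\equiv 1$ and bounds the pairing below by $\int_c^d|f|>0$, whereas you take $h=\chi_A$ for a level set $A$ where $|f|\ge\delta$; both yield the same conclusion.
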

\begin{proof}
Let $f\not\equiv 0$. There is a segment $[c,d]\subset(0,\infty)$ such that $c<d$ and ${\rm meas}\,\bigl((c,d)\cap \{x\in (0,\infty)\colon f(x)\not=0\}\bigr)>0$. 
Fix an arbitrary $\varepsilon>0$. By Lemma \ref{lemSW} there exists $\tilde{g}\in\mathbb{W}_{p',1/{v_1}}$ with ${\rm supp}\,\tilde{g}\subset[c,d]$ such that 
$\|\tilde{g}\|_{\mathscr{W}_{p',1/{v_1}}}<\varepsilon$ and $|\tilde{g}|=1$ on $(c,d)$. Then 
$$
\mathbf{J}_{\mathscr{W}_{p',1/{v_1}}}(f)\ge\frac{\int_0^\infty|f\tilde{g}|}{\|\tilde{g}\|_{\mathscr{W}_{p',1/{v_1}}}}\ge \varepsilon^{-1}\int_c^d|f|.
$$
\end{proof}

\section{Main result}

We start with auxiliary assertions needed to prove the main result.

\begin{lem}\label{Em1} 
Let $1<p<\infty,$  $v_0,v_1\in {\mathscr V}_p(0,\infty)$, $\frac{1}{v_1}\in L^{p'}_\loc(0,\infty),$ and the condition \eqref{S6} is satisfied. Then
\beqn\label{N1}
L_{1/{v_0}}^{p'}(0,\infty)\subset \mathscr{W}_{p',1/{v_1}}
\eeqn 
and
\beqn\label{Norm1}
\|g\|_{\mathscr{W}_{p',1/{v_1}}}\lesssim \|g\|_{p',1/{v_0}}
\eeqn 
for any $g\in L_{1/{v_0}}^{p'}(0,\infty).$
\end{lem}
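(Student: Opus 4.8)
The goal is to prove the continuous embedding $L^{p'}_{1/v_0}(0,\infty)\subset\mathscr{W}_{p',1/{v_1}}$ together with the norm estimate $\|g\|_{\mathscr{W}_{p',1/{v_1}}}\lesssim\|g\|_{p',1/{v_0}}$. By Lemma~\ref{norm}, the quantity $\|g\|_{\mathscr{W}_{p',1/{v_1}}}^{p'}$ is equivalent to the sum over $k\in\mathbb{Z}$ of the four integrals built from $G^{(0)}_{1,k},G^{(0)}_{2,k},G^{(1)}_{1,k},G^{(1)}_{2,k}$. So the plan is to bound each of these four families of integrals by $\int_{\eta_{k-1}}^{\eta_k}v_0^{-p'}|g|^{p'}$ (or a comparable localized piece) and then sum in $k$ to recover $\|g\|_{p',1/v_0}^{p'}=\int_0^\infty v_0^{-p'}|g|^{p'}$. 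The natural device for each piece is a weighted Hardy-type inequality on a single block $[\eta_{k-1},\eta_k]$, with weights expressed through $v_1^{-p'}$ and $V_1$, exactly as in the proof of Lemma~\ref{norm}; the relevant Hardy constants will be controlled by the normalization~\eqref{3}, which on each block reads $\bigl(\int_{\eta_{k-1}}^{\eta_k}v_1^{-p'}\bigr)^{1/p'}\bigl(\int_{\eta_{k-1}}^{\eta_k}v_0^p\bigr)^{1/p}\approx 1$.

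First I would treat a single block $[\eta_{k-1},\eta_k]$ and estimate, say, $\int_{\eta_{k-1}}^{\eta_k}v_1^{-p'}(t)\bigl|G^{(1)}_{1,k}(t)\bigr|^{p'}\,dt$. Writing out $G^{(1)}_{1,k}(t)=V_1(t)\int_t^{\eta_k}\frac{g(x)}{V_1(x)}\,dx$ and using $V_1(t)\approx V_1(x)$ for $t\le x$ inside a single block (this is precisely the content of \eqref{Gk1}), the inner factor $V_1(t)/V_1(x)$ is bounded, so the integral is dominated by $\int_{\eta_{k-1}}^{\eta_k}v_1^{-p'}(t)\bigl(\int_t^{\eta_k}|g|\bigr)^{p'}\,dt$. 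This is a dual Hardy operator on the block; its boundedness from $L^{p'}_{1/v_0}$ into $L^{p'}_{v_1^{-1}}$ is governed by the Muckenhoupt-type constant $\sup_t\bigl(\int_{\eta_{k-1}}^t v_1^{-p'}\bigr)^{1/p'}\bigl(\int_t^{\eta_k}v_0^{p}\bigr)^{1/p}$, which by \eqref{3} is $\le 1$ uniformly in $k$. The same scheme handles $G^{(1)}_{2,k}$ on $[\eta_k,\eta_{k+1}]$ using \eqref{Gk2}, and the two $\delta=0$ pieces $G^{(0)}_{1,k},G^{(0)}_{2,k}$, where the weight $\bigl(\int_{a(x)}^t v_1^{-p'}\bigr)$ replaces the factor $V_1(t)$ but is likewise comparable to $V_1$ on the block, so the same Hardy constant applies. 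Thus each of the four block integrals is $\lesssim\int_{\eta_{k-1}}^{\eta_k}v_0^{-p'}|g|^{p'}$, with implied constant depending only on $p$.

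Finally I would sum over $k\in\mathbb{Z}$. Since the local estimates produce $\int_{\eta_{k-1}}^{\eta_k}v_0^{-p'}|g|^{p'}$ (and the second-family pieces produce $\int_{\eta_k}^{\eta_{k+1}}v_0^{-p'}|g|^{p'}$), and the collection $\{[\eta_{k-1},\eta_k]\}_{k\in\mathbb{Z}}$ tiles $(0,\infty)$ with bounded overlap, the sum telescopes into $\int_0^\infty v_0^{-p'}|g|^{p'}=\|g\|_{p',1/v_0}^{p'}$. Combining with Lemma~\ref{norm} gives $\|g\|_{\mathscr{W}_{p',1/{v_1}}}^{p'}\lesssim\|g\|_{p',1/v_0}^{p'}$, which yields both the norm inequality~\eqref{Norm1} and, as a consequence, the inclusion~\eqref{N1}: any $g\in L^{p'}_{1/v_0}$ has finite $\mathscr{W}_{p',1/{v_1}}$-norm and hence lies in $\mathscr{W}_{p',1/{v_1}}$.

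The main obstacle I anticipate is the bookkeeping in the $G^{(0)}$-pieces, where the kernel carries the extra factor $\bigl(\int_{a(x)}^t v_1^{-p'}\bigr)$ rather than $V_1(t)$: one must verify that on each block this factor is genuinely comparable (not just bounded above) to $V_1$ so that the same uniform Hardy constant $\le 1$ from \eqref{3} survives, and that the roles of $a(x)$ versus $\eta_{k-1}$ do not spoil the comparison. The inequalities \eqref{Gk1} and \eqref{Gk2}, together with the equilibrium relation \eqref{2} giving $V_1(t)=2V_1^\pm(t)$, are exactly what make these comparisons uniform in $k$, so once those are invoked the argument reduces to a routine application of the one-weight Hardy inequality on each block.
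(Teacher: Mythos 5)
Your proposal is correct, but it follows a different route from the paper. The paper's proof is global and dual: using the pointwise kernel bounds $V_1(t)\le 2V_1(x)$ and $\int_{a(x)}^t v_1^{-p'}\le V_1^-(x)$ valid for $t\le x\le a^{-1}(t)$ (its inequality \eqref{N2}), it first collapses both $\mathbb{G}(g)$ and $\mathcal{G}(g)$ into the single functional $\mathsf{G}(g)$, i.e.\ it really proves the chain $L^{p'}_{1/v_0}\subset\mathbb{W}_{p',1/v_1}\subset\mathscr{W}_{p',1/v_1}$; it then passes to the dual inequality $\bigl(\int_0^\infty v_0^p(y)(\int_{a(y)}^{b(y)}|f|)^p\,dy\bigr)^{1/p}\le C_1\|f\|_{p,v_1}$, invokes the Hardy--Steklov boundedness characterization of \cite[Theorem 3.1]{PSU0} for the constant $\mathcal{A}$, and checks $\mathcal{A}\lesssim 1$ from \eqref{3}. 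You instead stay on the primal side, use the blockwise representation \eqref{qq1} of Lemma \ref{norm} (of which only the easy upper-estimate half is needed, so there is no circularity), bound the kernels block by block via \eqref{Gk1}--\eqref{Gk2}, and reduce to standard fixed-endpoint Hardy inequalities on each $[\eta_{k-1},\eta_k]$ with Muckenhoupt constants $\le 1$ by \eqref{3}. Both arguments work and both ultimately rest on the same two facts (the equilibrium-based kernel bounds and the normalization \eqref{3}); the paper's version is shorter because it outsources the variable-limit Hardy--Steklov criterion to \cite{PSU0}, while yours is more self-contained and elementary. Two small remarks: your worry about the kernel factor being ``genuinely comparable, not just bounded above'' is unnecessary --- for the upper estimate \eqref{Norm1} an upper bound on the kernel suffices, exactly as in \eqref{N2}; and for the $G^{(\delta)}_{2,k}$ pieces you should make explicit the substitution $y=a^{-1}(t)$ together with $v_1^{-p'}(a(y))a'(y)\le 2v_1^{-p'}(y)$ from \eqref{eq}, which converts the variable upper limit $a^{-1}(t)$ into a fixed-endpoint Hardy inequality on $[\eta_k,\eta_{k+1}]$.
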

\begin{proof}
On the strength of 
\beqn\label{N2}
V_1^+(t)\le \int_{t}^{b(x)}v_1^{-p'}\le V_1(x)=2V_1^-(x), \qquad t\leq x\leq a^{-1}(t)
\eeqn
it holds
$$
\|g\|_{\mathscr{W}_{p',1/{v_1}}}\lesssim \biggl(\int_0^\infty v_1^{-p'}(t)\biggl(\int_t^{a^{-1}(t)}|g(x)|\,dx
\biggr)^{p'}\,dt\biggr)^{1/p'}.
$$
Then \eqref{Norm1} will follow from 
\beqn\label{N3}
\biggl(\int_0^\infty v_1^{-p'}(t)\biggl(\int_t^{a^{-1}(t)}|g(x)|\,dx
\biggr)^{p'}\,dt\biggr)^{1/p'}\le C\|g\|_{p',1/{v_0}}.
\eeqn
Consider the dual to \eqref{N3} inequality 
$$
\biggl(\int_0^\infty v_0^{p}(y)\biggl(\int_{a(y)}^y|f|
\biggr)^{p}\,dy\biggr)^{1/p}\le C\|f\|_{p,v_1},
$$ 
which is a consequence of
$$
\biggl(\int_0^\infty v_0^{p}(y)\biggl(\int_{a(y)}^{b(y)}|f|
\biggr)^{p}\,dy\biggr)^{1/p}\le C_1\|f\|_{p,v_1}.
$$ 
It is known \cite[Theorem 3.1]{PSU0} that 
$$
C_1\approx \mathcal{A}:=\sup_t\biggl(
\int_{a(t)}^{b(t)}v_1^{-p'}\biggr)^{1/p'}
\biggl(
\int_{b^{-1}(t)}^{a^{-1}(t)}v_0^{p}\biggr)^{1/p}.
$$ 
Put
\begin{equation*}
V_0(t):=\int_{a(t)}^{b(t)}v_0^p,\qquad V_0^\pm(t):=\int_{\Delta^\pm(t)}v_0^p.
\end{equation*}
We have by \eqref{N2}
$$
V_1^+(t)\le V_1(a^{-1}(t)),\qquad \int_t^{a^{-1}(t)}v_0^p\le 
\int_t^{b(a^{-1}(t))}v_0^p=:V_0^+(a^{-1}(t)).
$$ 
Therefore, by \eqref{3}, 
$$
\mathcal{A}_a(t):=
\biggl(
\int_{a(t)}^{b(t)}v_1^{-p'}\biggr)^{1/p'}
\biggl(
\int_{t}^{a^{-1}(t)}v_0^{p}\biggr)^{1/p}\le  
V_1(a^{-1}(t))^{1/p'}V_0(a^{-1}(t))^{1/p}=1.
$$ 
Analogously,
$$
\mathcal{A}_b(t):=
\biggl(
\int_{a(t)}^{b(t)}v_1^{-p'}\biggr)^{1/p'}
\biggl(
\int_{b^{-1}(t)}^tv_0^{p}\biggr)^{1/p}\le 
V_1(b^{-1}(t))^{1/p'}V_0(b^{-1}(t))^{1/p}=1.
$$ 
Thus, 
$$
\mathcal{A}\approx\sup_{t>0}[\mathcal{A}_a(t)+\mathcal{A}_b(t)]\lesssim 1
$$ 
and \eqref{Norm1} follows.
\end{proof}

\begin{cor}\label{Cor1} Let $1<p<\infty$ and $f\in \mathfrak{D}_{\mathscr{W}_{p',1/{v_1}}}$ {\rm(}see \eqref{D-X}{\rm)}. 
Under the conditions of Lemma \ref{Em1} the embedding \eqref{N1} entails
$f\in L_{v_0}^p(0,\infty)$ and
\begin{equation}\label{v0}
\infty>J_{\mathscr{W}_{p',1/{v_1}}}(f)\gtrsim \|f\|_{p,v_0}.
\end{equation}
\end{cor}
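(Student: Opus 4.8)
The plan is to prove Corollary \ref{Cor1} by a duality argument that converts the finiteness of the weak associate functional $J_{\mathscr{W}_{p',1/{v_1}}}(f)$ into a lower bound involving the $L^p_{v_0}$ norm of $f$. First I would observe that the embedding \eqref{N1} from Lemma \ref{Em1}, together with its norm estimate \eqref{Norm1}, says precisely that the identity map $L^{p'}_{1/v_0}(0,\infty)\hookrightarrow\mathscr{W}_{p',1/{v_1}}$ is bounded. Since $f\in\mathfrak{D}_{\mathscr{W}_{p',1/{v_1}}}$, the integral $\int_0^\infty fg$ converges absolutely for every $g\in\mathscr{W}_{p',1/{v_1}}$, so by the embedding it converges for every $g\in L^{p'}_{1/v_0}$. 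This should already force $f\in L^p_{v_0}$: the pairing $g\mapsto\int_0^\infty fg$ being well-defined on all of $L^{p'}_{1/v_0}$ means $v_0 f$ pairs with every element of $L^{p'}$, which by the standard characterization of the K\"othe dual of $L^{p'}$ (namely $L^p$) gives $v_0 f\in L^p$, i.e. $f\in L^p_{v_0}$.

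Next I would establish the quantitative inequality \eqref{v0}. The key is to bound $J_{\mathscr{W}_{p',1/{v_1}}}(f)$ from below by testing against functions $g$ that are favorable for the $L^{p'}_{1/v_0}$ side. Writing out the definition,
\begin{equation*}
J_{\mathscr{W}_{p',1/{v_1}}}(f)=\sup_{0\not=g\in\mathscr{W}_{p',1/{v_1}}}\frac{\bigl|\int_0^\infty fg\bigr|}{\|g\|_{\mathscr{W}_{p',1/{v_1}}}}\ge\sup_{0\not=g\in L^{p'}_{1/v_0}}\frac{\bigl|\int_0^\infty fg\bigr|}{\|g\|_{\mathscr{W}_{p',1/{v_1}}}},
\end{equation*}
where the inequality uses that $L^{p'}_{1/v_0}\subset\mathscr{W}_{p',1/{v_1}}$. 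On this restricted supremum I would apply \eqref{Norm1} in the denominator, replacing $\|g\|_{\mathscr{W}_{p',1/{v_1}}}$ by the larger quantity $c\,\|g\|_{p',1/v_0}$ for a suitable constant $c$, which only decreases the ratio, yielding
\begin{equation*}
J_{\mathscr{W}_{p',1/{v_1}}}(f)\gtrsim\sup_{0\not=g\in L^{p'}_{1/v_0}}\frac{\bigl|\int_0^\infty fg\bigr|}{\|g\|_{p',1/v_0}}.
\end{equation*}
The remaining supremum is exactly the weak associate functional of $f$ with respect to the weighted Lebesgue space $L^{p'}_{1/v_0}$, and since this space is ideal its associate is $L^p_{v_0}$ with equal norm; hence the supremum equals $\|f\|_{p,v_0}$ by the sharp duality between $L^{p'}_{1/v_0}$ and $L^p_{v_0}$.

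To make the last step rigorous I would exhibit the near-extremal tester explicitly: take $g=\sgn(f)\,|v_0 f|^{p-1}v_0^{-1}$ (truncated to finite measure sets where needed to ensure $g\in L^{p'}_{1/v_0}$), compute $\int_0^\infty fg=\|v_0 f\|_p^p$ and $\|g\|_{p',1/v_0}=\|v_0 f\|_p^{p-1}$, and take the ratio to recover $\|f\|_{p,v_0}$. The main obstacle is the bookkeeping at the two ends of the argument: one must first know $f\in L^p_{v_0}$ before the extremizer is admissible, and one must justify the interchange of the restricted supremum with the honest $L^{p'}_{1/v_0}$-to-$L^p_{v_0}$ duality while respecting the conventions on indeterminate expressions adopted in the paper. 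Once $f\in L^p_{v_0}$ is secured from the domain hypothesis, the chain of inequalities closes and both conclusions of \eqref{v0}, namely $J_{\mathscr{W}_{p',1/{v_1}}}(f)<\infty$ forcing finiteness of $\|f\|_{p,v_0}$ and the explicit lower bound, follow together.
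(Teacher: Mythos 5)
Your proposal is correct and follows essentially the same route as the paper: the paper's proof is exactly the one-line chain $J_{\mathscr{W}_{p',1/{v_1}}}(f)\gtrsim \sup_{0\not=g\in L^{p'}_{1/v_0}}|\int_0^\infty fg|/\|g\|_{p',1/v_0}=\|f\|_{p,v_0}$, obtained by restricting the supremum to $L^{p'}_{1/v_0}\subset\mathscr{W}_{p',1/v_1}$ and enlarging the denominator via \eqref{Norm1}. Your additional remarks on securing $f\in L^p_{v_0}$ from the K\"othe dual characterization and on the explicit near-extremal tester are sound elaborations of steps the paper leaves implicit.
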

\begin{proof} By Lemma \ref{Em1}
\begin{align*}
J_{\mathscr{W}_{p',1/{v_1}}}(f)=\sup_{0\not=g\in \mathscr{W}_{p',1/{v_1}}} \frac{\Bigl|\int_0^\infty gf\Bigr|}{\|g\|_{\mathscr{W}_{p',1/{v_1}}}}
\gtrsim 
\sup_{0\not=g\in L_{1/{v_0}}^{p'}(0,\infty)} \frac{\Bigl|\int_0^\infty gf\Bigr|}{\|g\|_{p',1/{v_0}}}=\|f\|_{p,v_0}.
\end{align*}
\end{proof}

\begin{lem}\label{Em2} Let $1<p<\infty$.
Under the conditions of Lemma \ref{Em1}, if 
$J_{\mathscr{W}_{p',1/{v_1}}}(f)<\infty$ then $f=\tilde{f}$ a.e., where $\tilde{f}\in  AC_\loc(0,\infty)$ and 
\begin{equation}\label{dd}
\infty>J_{\mathscr{W}_{p',1/{v_1}}}(f)\gtrsim\|\tilde{f}'\|_{p,v_1}.
\end{equation}
\end{lem}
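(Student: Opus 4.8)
The plan is to upgrade Corollary \ref{Cor1}, which already gives $f\in L^p_{v_0}\subset L^1_\loc(0,\infty)$, to a statement about the distributional derivative $Df$. Since $f\in L^1_\loc$ it is a regular distribution, and I would study the linear functional
\[
\Lambda(\psi):=\langle Df,\psi\rangle=-\int_0^\infty f\,\psi',\qquad \psi\in C_0^\infty(0,\infty).
\]
The goal is to show that $\Lambda$ is bounded with respect to the norm $\|\psi\|_{p',1/{v_1}}=\|\psi\,v_1^{-1}\|_{L^{p'}}$, with operator norm $\lesssim J_{\mathscr{W}_{p',1/{v_1}}}(f)$. Everything therefore reduces to the single estimate
\[
\Bigl|\int_0^\infty f\,\psi'\Bigr|\lesssim J_{\mathscr{W}_{p',1/{v_1}}}(f)\,\|\psi\|_{p',1/{v_1}},\qquad \psi\in C_0^\infty(0,\infty).
\]

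First I would note that for $\psi\in C_0^\infty$ the function $g:=\psi'$ has compact support and satisfies $\mathsf{G}(\psi')<\infty$, so $g\in\mathbb{W}_{p',1/{v_1}}\subset\mathscr{W}_{p',1/{v_1}}$. Directly from the definition of $J_{\mathscr{W}_{p',1/{v_1}}}$ this gives
\[
\Bigl|\int_0^\infty f\,\psi'\Bigr|=\Bigl|\int_0^\infty f g\Bigr|\le J_{\mathscr{W}_{p',1/{v_1}}}(f)\,\|\psi'\|_{\mathscr{W}_{p',1/{v_1}}}.
\]
Thus the whole lemma rests on the \emph{reverse Sobolev estimate}
\[
\|\psi'\|_{\mathscr{W}_{p',1/{v_1}}}=\mathbb{G}(\psi')+\mathcal{G}(\psi')\lesssim \|\psi\|_{p',1/{v_1}},
\]
i.e.\ on the boundedness of $\psi\mapsto\psi'$ from $L^{p'}_{1/{v_1}}$ into the weak associate space.

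This reverse estimate is the main obstacle. To prove it I would integrate by parts inside the inner integrals defining $\mathbb{G}$ and $\mathcal{G}$. For $\mathcal{G}(\psi')$ one writes
\[
\int_t^{a^{-1}(t)}\frac{\psi'(x)}{V_1(x)}\,dx=\frac{\psi(a^{-1}(t))}{V_1(a^{-1}(t))}-\frac{\psi(t)}{V_1(t)}+\int_t^{a^{-1}(t)}\psi(x)\,\frac{V_1'(x)}{V_1^2(x)}\,dx.
\]
The diagonal boundary term $-\psi(t)/V_1(t)$, multiplied by $v_1^{-p'}(t)V_1^{p'}(t)$ and integrated, reproduces exactly $\int_0^\infty v_1^{-p'}|\psi|^{p'}=\|\psi\|_{p',1/{v_1}}^{p'}$; the boundary term at $a^{-1}(t)$ is transported to the diagonal by the substitution $y=a^{-1}(t)$ together with the bounds $v_1^{-p'}(a(y))a'(y)\lesssim v_1^{-p'}(y)$ (see \eqref{eq}) and $V_1(a(y))\approx V_1(y)$; and the integral term is controlled using $|V_1'(x)|\lesssim v_1^{-p'}(x)$ followed by the weighted Hardy inequality \cite[p.~6]{KPS}, with the same uniform constants $\mathbb{A}_1,\mathbb{A}_2\le(p-1)^{-1/p}$ produced in the proof of Lemma \ref{norm}. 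The functional $\mathbb{G}(\psi')$ is treated analogously after the same integration by parts and a further use of \eqref{cococo} and \eqref{eq}. The delicate bookkeeping is to keep the Hardy constants uniform over the cells $[\eta_{k-1},\eta_k]$, which is exactly what the equilibrium and normalization relations \eqref{2}--\eqref{3} guarantee.

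Finally, having the displayed bound for all $\psi\in C_0^\infty(0,\infty)$, the functional $\Lambda$ extends by density to a bounded functional on $L^{p'}_{1/{v_1}}$ of norm $\lesssim J_{\mathscr{W}_{p',1/{v_1}}}(f)$. By the Riesz representation of $(L^{p'}_{1/{v_1}})^\ast=L^p_{v_1}$ there is $h\in L^p_{v_1}$ with $Df=h$ and $\|v_1 h\|_{L^p}\lesssim J_{\mathscr{W}_{p',1/{v_1}}}(f)$. Since $v_1^{-1}\in L^{p'}_\loc$, the inclusion $v_1h\in L^p$ forces $h\in L^1_\loc(0,\infty)$ by H\"older's inequality on compacta; hence the distribution $f$ has a locally integrable derivative and therefore coincides a.e.\ with a function $\tilde f\in AC_\loc(0,\infty)$ satisfying $\tilde f'=h$. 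This yields $\|\tilde f'\|_{p,v_1}=\|v_1 h\|_{L^p}\lesssim J_{\mathscr{W}_{p',1/{v_1}}}(f)$, which is precisely \eqref{dd}.
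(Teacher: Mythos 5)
Your proposal is correct and follows essentially the same route as the paper's proof: the key step in both is the reverse estimate $\|\psi'\|_{\mathscr{W}_{p',1/{v_1}}}\lesssim\|\psi\|_{p',1/{v_1}}$ for $\psi\in C_0^\infty(0,\infty)$, obtained by integrating by parts inside $\mathbb{G}$ and $\mathcal{G}$, using the equilibrium identity \eqref{eq} to control the boundary and derivative terms, and invoking Hardy-type boundedness, after which the functional $\phi\mapsto\int f\phi'$ is represented by an element of $L^p_{v_1}$ and $f$ is identified with an $AC_\loc$ function. The only cosmetic differences are that you extend the functional by density where the paper uses Hahn--Banach, and you run the Hardy step cellwise as in Lemma \ref{norm} where the paper appeals to the Hardy--Steklov criterion of \cite{SU}; neither changes the substance.
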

\begin{proof} Let
$$
g_\phi(x):=\frac{d\phi}{dx},\qquad \phi\in C_0^\infty(0,\infty).
$$ 
Show that $g_\phi\in \mathscr{W}_{p',1/{v_1}}$. It is sufficiently to prove  the inequalities
$\mathbb{G}(g_\phi)\lesssim \|\phi\|_{p',1/{v_1}}$ and $\mathcal{G}(g_\phi)\lesssim \|\phi\|_{p',1/{v_1}}$.
By taking into account the equality 
\begin{equation}\label{eq}
v_1^{-p'}(a(x))a'(x)+v_1^{-p'}(b(x))b'(x)= 2v_1^{-p'}(x),
\end{equation}
 which follows from the equilibrium condition \eqref{2}, we write
\begin{align}
\int_t^{a^{-1}(t)}\frac{g_\phi(x)}{V_1(x)}\biggl(\int_{a(x)}^tv_1^{-p'} \biggr)\ 
dx=-\phi(t)
+\int_t^{a^{-1}(t)}\phi(x)\biggl\{
\frac{v_1^{-p'}(a(x))a'(x)}{V_1^-(x)}\nonumber\\+
\frac{v_1^{-p'}(x)\int_{a(x)}^tv_1^{-p'}}{[V_1^-(x)]^2}-\frac{v_1^{-p'}(a(x))a'(x)\int_{a(x)}^tv_1^{-p'}}{[V_1^-(x)]^2}
\biggr\}\,dx 
\le |\phi(t)|+5 
\int_t^{a^{-1}(t)}\frac{v_1^{-p'}(x)|\phi(x)|}{V_1^-(x)}\,dx.
\end{align}
Thus,
$$
\mathbb{G}(g_\phi)\lesssim \|\phi\|_{p',v_1^{-1}} + \biggl(\int_0^\infty v_1^{-p'}(t)\biggl(\int_t^{a^{-1}(t)}\frac{v_1^{-p'}(x)|\phi(x)|}{V_1^-(x)}\,dx
\biggr)^{p'}\,dt\biggr)^{1/p'}.
$$
Put $h=v_1^{-1}|\phi|$ and consider dual to
\begin{equation}\label{02}
\biggl(\int_0^\infty v_1^{-p'}(t)\biggl(\int_t^{a^{-1}(t)}\frac{v_1^{1-p'}(x)h(x)}{V_1^-(x)}\,dx
\biggr)^{p'}\,dt\biggr)^{1/p'}\le C\|h\|_{p'}
\end{equation} 
inequality
$$
\biggl(\int_0^\infty \frac{v_1^{-p'}(x)}{[V_1^-(x)]^p}\biggl(\int_{a(x)}^x{v_1^{-1}(t)|\psi(t)|}\,dt
\biggr)^{p}\,dx\biggr)^{1/p}\le C\|\psi\|_{p},
$$ 
which follows from
$$
\biggl(\int_0^\infty \frac{v_1^{-p'}(x)}{[V_1^-(x)]^p}\biggl(\int_{a(x)}^{b(x)}{v_1^{-1}(t)|\psi(t)|}\,dt
\biggr)^{p}\,dx\biggr)^{1/p}\le C_2\|\psi\|_{p}.
$$ 
By the criteria for the boundedness of Hardy--Steklov operators \cite[Theorem 1]{SU},
$$
C_2\approx\mathscr{A}:=\sup_t\biggl(\int_{a(t)}^{b(t)}v^{-p'}_1\biggr)^{1/p'}\biggl(\int_{b^{-1}(t)}^{a^{-1}(t)}
\frac{v^{-p'}_1}{[V_1^-]^p}\biggr)^{1/p}.
$$
Since 
$\int_{b^{-1}(t)}^{a^{-1}(t)}
{v^{-p'}_1}{[V_1^-]^{-p}}\lesssim V_1^{1-p}(t)$ (see \cite[(5.18)]{PSU1}) we have $\mathscr{A}\lesssim 1$. This yields 
$\mathbb{G}(g_\phi)\lesssim \|\phi\|_{p',v_1^{-1}}<\infty$.

Similarly,
\begin{multline*}
\int_t^{a^{-1}(t)}\frac{g_\phi(x)}{V_1(x)} 
dx=\frac{\phi(a^{-1}(t))} {V_1^-(a^{-1}(t))}-\frac{\phi(t)}{V_1^-(t)}+
\int_t^{a^{-1}(t)}\phi(x)
\frac{v_1^{-p'}(x)-v_1^{-p'}(a(x))a'(x)}{[V_1^-(x)]^2}\,dx\\\le \frac{|\phi(a^{-1}(t))|} {V_1^-(a^{-1}(t))}+\frac{|\phi(t)|}{V_1^-(t)}+
\int_t^{a^{-1}(t)}|\phi(x)|
\frac{v_1^{-p'}(x)+ v_1^{-p'}(a(x))a'(x)}{[V_1^-(x)]^2}\,dx.
\end{multline*}
Since $2v_1^{-p'}(a^{-1}(t))[a^{-1}(t)]'\ge
v_1^{-p'}(t)$ (see \eqref{eq} with $x=a^{-1}(t)$) and 
$V_1(a^{-1}(t))\ge V_1^+(t)=\frac{1}{2}V_1(t)$, we have
\begin{align*}&
\int_0^\infty v_1^{-p'}(t)\,V_1^{p'}(t)\biggl[\frac{|\phi(a^{-1}(t))|} {V_1^-(a^{-1}(t))}+\frac{|\phi(t)|}{V_1^-(t)}\biggr]^{p'}\,dt\\
\lesssim &\int_0^\infty [a^{-1}(t)]'|\phi(a^{-1}(t))v_1^{-1}(a^{-1}(t))|^{p'}\,dt
+\int_0^\infty |\phi(t)v_1^{-1}(t)|^{p'}\,dt\simeq \|\phi\|_{p',v_1^{-1}}.
\end{align*} 
Further,
\begin{align*}&
\int_0^\infty v_1^{-p'}(t)\,V_1^{p'}(t)\biggl(\int_t^{a^{-1}(t)}|\phi(x)|
\frac{v_1^{-p'}(x)+ v_1^{-p'}(a(x))a'(x)}{[V_1^-(x)]^2}\,dx\biggr)^{p'}\,dt\\ &\le 
\int_0^\infty v_1^{-p'}(t)\biggl(\int_t^{a^{-1}(t)}|\phi(x)|
\frac{v_1^{-p'}(x)+ v_1^{-p'}(a(x))a'(x)}{V_1^-(x)}\,dx\biggr)^{p'}\,dt\\ &\le3 
\int_0^\infty v_1^{-p'}(t)\biggl(\int_t^{a^{-1}(t)}\frac{v_1^{-p'}(x)|\phi(x)|}{V_1^-(x)}\,
dx\biggr)^{p'}\,dt
\simeq\int_0^\infty v_1^{-p'}(t)\biggl(\int_t^{a^{-1}(t)}\frac{v_1^{1-p'}(x)h(x)}{V_1^-(x)}\,dx\biggr)^{p'}\,dt
\end{align*} 
(see \eqref{02}). Therefore, $\mathcal{G}(g_\phi)\lesssim \|\phi\|_{p',1/{v_1}}<\infty$ and 
\begin{equation}\label{D1}
\|g_\phi\|_{\mathscr{W}_{p',1/{v_1}}}\lesssim \|\phi\|_{p',1/{v_1}}.
\end{equation} 
It follows from \eqref{D1} that
\begin{align}\label{dds}
\sup_{0\not=\phi\in C^\infty_0(0,\infty)}\frac{\Bigl|\int_0^\infty f\phi'\Bigr|}{\|\phi\|_{p',1/{v_1}}}&\lesssim \sup_{0\not=\phi\in C^\infty_0(0,\infty)}
\frac{\Bigl|\int_0^\infty fg_\phi\Bigr|}{\|g_\phi\|_{\mathscr{W}_{p',1/{v_1}}}}\nonumber\\
&\le\sup_{g\in \mathscr{W}_{p',1/{v_1}}}\frac{\Bigl|\int_0^\infty fg\Bigr|}{\|g\|_{\mathscr{W}_{p',1/{v_1}}}}=J_{\mathscr{W}_{p',1/{v_1}}}(f)<\infty.
\end{align}
Put $\Lambda\phi:=\int_0^\infty f\phi'$, $\phi\in C^\infty_0(0,\infty)$. On the strength of \eqref{dds}, $|\Lambda\phi|\lesssim\|\phi\|_{p',1/{v_1}}$. 
By the Hahn--Banach theorem there exists an extension $\tilde{\Lambda}\in \bigl(L^{p'}_{1/{v_1}}(0,\infty)\bigr)^\ast$ of $\Lambda$. By the Riesz representation theorem, 
there exists $u\in L^p_{v_1}(0,\infty)$ such that $\tilde{\Lambda}h=-\int_0^\infty uh$, $h\in L^{p'}_{1/{v_1}}(0,\infty)$. It implies
\begin{equation}\label{d1}
-\int_0^\infty u\phi=\int_0^\infty f\phi',\qquad\phi\in C_0^\infty(0,\infty),
\end{equation} 
this means that $u$ is a distributional derivative of $f$. Then by \cite[Theorem 7.13]{Leoni} the function $f$ a.e. coincides with a function $\tilde f\in AC_\loc(0,\infty)$ and $u=\tilde{f}^\prime.$
It follows from \eqref{d1} that
\begin{align*}
J_{\mathscr{W}_{p',1/{v_1}}}(f)\ge\sup_{0\not=\phi\in C^\infty_0(0,\infty)}\frac{\Bigl|\int_0^\infty fg_\phi\Bigr|}{\|g_\phi\|_{\mathscr{W}_{p',1/{v_1}}}}
\gtrsim\sup_{0\not=\phi\in C^\infty_0(0,\infty)}\frac{\Bigl|\int_0^\infty \tilde{f}'\phi\Bigr|}{\|\phi\|_{p',1/{v_1}}}=
\|\tilde{f}'\|_{p,v_1}.
\end{align*}
\end{proof}

Our main result of the paper reads the following

\begin{thm}\label{theoremMain}
Let $1<p<\infty$ and $f\in\mathfrak{D}_{\mathscr{W}_{p',1/{v_1}}}$. Then $J_{\mathscr{W}_{p',1/{v_1}}}(f)<\infty$ if and only if $f=\tilde{f}$ a.e., 
$\tilde{f}\in \wcnn^1_p$ and $\|f\|_{W^1_p}\approx J_{\mathscr{W}_{p',1/{v_1}}}(f).$ Thus,
$$
\wcnn^1_p=[\mathscr{W}_{p',1/{v_1}}]_w^\prime=[[\wcnn^1_p]^\prime_w]_w^\prime.
$$
\end{thm}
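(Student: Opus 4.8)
The plan is to read Theorem \ref{theoremMain} as the synthesis of the three preceding results — the H\"older-type inequality of Remark \ref{remark}, Corollary \ref{Cor1}, and Lemma \ref{Em2} — and to establish the stated equivalence as a two-sided estimate $\|\tilde f\|_{W^1_p}\approx J_{\mathscr{W}_{p',1/{v_1}}}(f)$ together with the identification of the underlying function classes. Throughout, the hypothesis $f\in\mathfrak{D}_{\mathscr{W}_{p',1/{v_1}}}$ serves only to guarantee that $\int_0^\infty fg$ is finite for every $g\in\mathscr{W}_{p',1/{v_1}}$, so that $J_{\mathscr{W}_{p',1/{v_1}}}(f)$ is a well-defined element of $[0,\infty]$; the entire content is the characterisation of when it is finite.

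For the \emph{if} direction I would start from a representative $\tilde f\in\wcnn^1_p$ with $f=\tilde f$ a.e. Since $\tilde f$ lies in $\wcnn^1_p$, the H\"older-type inequality recorded in Remark \ref{remark} gives $\bigl|\int_0^\infty \tilde f g\bigr|\lesssim\|\tilde f\|_{\wcnn^1_p}\,\|g\|_{\mathscr{W}_{p',1/{v_1}}}$ for every $g\in\mathscr{W}_{p',1/{v_1}}$; dividing by $\|g\|_{\mathscr{W}_{p',1/{v_1}}}$ and taking the supremum yields $J_{\mathscr{W}_{p',1/{v_1}}}(f)=J_{\mathscr{W}_{p',1/{v_1}}}(\tilde f)\lesssim\|\tilde f\|_{W^1_p}<\infty$, which is one half of the asserted norm equivalence.

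For the \emph{only if} direction I would assume $J_{\mathscr{W}_{p',1/{v_1}}}(f)<\infty$ and assemble the two summands of the $W^1_p$-norm separately. Corollary \ref{Cor1} directly gives $f\in L^p_{v_0}(0,\infty)$ with $\|v_0 f\|_p=\|f\|_{p,v_0}\lesssim J_{\mathscr{W}_{p',1/{v_1}}}(f)$, while Lemma \ref{Em2} produces a representative $\tilde f\in AC_\loc(0,\infty)$, $f=\tilde f$ a.e., whose distributional derivative obeys $\|v_1\tilde f'\|_p=\|\tilde f'\|_{p,v_1}\lesssim J_{\mathscr{W}_{p',1/{v_1}}}(f)$. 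As both statements concern the same a.e.\ class, $\|v_0\tilde f\|_p=\|v_0 f\|_p$, and adding the estimates gives $\|\tilde f\|_{W^1_p}=\|v_0\tilde f\|_p+\|v_1\tilde f'\|_p\lesssim J_{\mathscr{W}_{p',1/{v_1}}}(f)<\infty$; in particular $\tilde f\in W^1_p$. Together with the previous paragraph this furnishes the full equivalence $\|f\|_{W^1_p}\approx J_{\mathscr{W}_{p',1/{v_1}}}(f)$.

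It then remains to read off the chain of equalities. Here condition \eqref{S6} is decisive: by \cite[Lemma 1.6]{Oin} it forces $\wcn^1_p=W^1_p$, so the representative $\tilde f\in W^1_p$ obtained above in fact lies in $\wcn^1_p$, the closure of $\wcnn^1_p$. The two directions thus identify $[\mathscr{W}_{p',1/{v_1}}]'_w$, as a set of a.e.\ classes and up to equivalence of norms, with the Sobolev space $W^1_p=\wcn^1_p$, which under \eqref{S6} is precisely the completion of $\wcnn^1_p$ under study; recalling from Theorem \ref{T1} that $\mathscr{W}_{p',1/{v_1}}=[\wcnn^1_p]'_w$, this yields $\wcnn^1_p=[\mathscr{W}_{p',1/{v_1}}]'_w=[[\wcnn^1_p]'_w]'_w$. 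The main work sits in the \emph{only if} direction, specifically in Lemma \ref{Em2}, where finiteness of $J_{\mathscr{W}_{p',1/{v_1}}}(f)$ must be converted into weighted control of $\tilde f'$ — achieved there through a Hahn--Banach extension of $\phi\mapsto\int_0^\infty f\phi'$ to $L^{p'}_{1/{v_1}}$ followed by Riesz representation; the only bookkeeping subtlety in assembling the theorem is reconciling the resulting $W^1_p$-membership with the compactly supported class $\wcnn^1_p$, which the identification $\wcn^1_p=W^1_p$ under \eqref{S6} resolves.
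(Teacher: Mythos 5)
Your \emph{if} direction and the two norm estimates in the \emph{only if} direction match the paper exactly: sufficiency from Remark \ref{remark}, the bound $\|f\|_{p,v_0}\lesssim J_{\mathscr{W}_{p',1/{v_1}}}(f)$ from Corollary \ref{Cor1}, and the bound $\|\tilde f'\|_{p,v_1}\lesssim J_{\mathscr{W}_{p',1/{v_1}}}(f)$ from Lemma \ref{Em2}. Up to that point you have correctly shown $\tilde f\in W^1_p$ with the asserted norm equivalence.

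The gap is in your final step. You conclude that $\tilde f$ lies in $\wcn^1_p$ (the \emph{closure} of $\wcnn^1_p$, which equals $W^1_p$ under \eqref{S6}) and then treat this as settling the claim $\tilde f\in\wcnn^1_p$. But the theorem asserts membership in $\wcnn^1_p$ itself --- the non-closed class of locally absolutely continuous functions with $\tilde f(0)=0$ and compact support in $I$ --- and membership in the closure does not give this; the identity $\wcn^1_p=W^1_p$ cannot be used to collapse $\wcnn^1_p$ onto $W^1_p$. An extra argument is required to show that finiteness of $J_{\mathscr{W}_{p',1/{v_1}}}(f)$ forces $\tilde f$ to vanish near $\infty$ and at $0$. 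The paper does this by contradiction using Lemma \ref{lemSW}: if $\mathrm{meas}\bigl((b,\infty)\cap\{|\tilde f|>0\}\bigr)>0$ for every $b$, one picks disjoint segments $[a_k,b_k]$ on which $m_k:=\min|\tilde f|>0$, uses Lemma \ref{lemSW} to produce $g_k\in\mathscr{W}_{p',1/{v_1}}$ with $\|g_k\|_{\mathscr{W}_{p',1/{v_1}}}<2^{-k}$ and $|g_k|=\theta_k:=\bigl(k\,m_k(b_k-a_k)\bigr)^{-1}$ on $(a_k,b_k)$, and sets $g:=\sum_k g_k$; then $\|g\|_{\mathscr{W}_{p',1/{v_1}}}\le 1$ while $\int_0^\infty|\tilde f g|\ge\sum_k 1/k=\infty$, contradicting $f\in\mathfrak{D}_{\mathscr{W}_{p',1/{v_1}}}$ and the finiteness of $J_{\mathscr{W}_{p',1/{v_1}}}(f)$; the behaviour at $0$ is handled the same way. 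This compact-support step is the genuinely new content of the theorem beyond Corollary \ref{Cor1} and Lemma \ref{Em2}, and your proposal omits it.
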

\begin{proof} The {\it sufficient} part of Theorem follows from Remark \ref{remark}. 
 
{\it Necessity}. It is already proved that $\tilde{f}\in W^1_p$ (see \eqref{v0}, \eqref{dd}). Let $E:=\{x\in (0,\infty):|\tilde f(x)|>0\}$. Since $|\tilde f|$ is continuous on $(0,\infty)$, then 
$E$ is open set.  Suppose that  ${\rm mes}((b,\infty)\cap E)>0$ for any $b\in (0,\infty)$.  Then there exists a sequence of segments $\{[a_k,b_k]\}_1^\infty\subset (0,\infty)$ such that 
$b_k<a_{k+1}$ and $m_k:=\min_{x\in [a_k,b_k]}|\tilde f(x)|>0$. 
Put $\theta_k:=\frac{1}{k m_k(b_k-a_k)}$. 
By Lemma  \ref{lemSW}  there is $g_k\in \mathscr{W}_{p',1/{v_1}}$ such that $\|g_k\|_{\mathscr{W}_{p',1/{v_1}}}<2^{-k}$ and $|g_k|=\theta_k$ on $(a_k,b_k)$.
Put 
 $g:=\sum_{k=1}^\infty g_k$.
 Then $\|g\|_{\mathscr{W}_{p',1/{v_1}}}\le 1$
  and
  $$
  \int_0^\infty|\tilde{f}g|\ge \sum_{k=1}^\infty \theta_k m_k(b_k-a_k)=\sum_{k=1}^\infty\frac{1}{k}=\infty,
    $$
  which contradicts with $J_{\mathscr{W}_{p',1/{v_1}}}(f)<\infty.$

Similarly we show that $\tilde{f}(0)=0.$ Thus, $\supp\tilde{f}\subset [0,\infty)$ is compact. 
\end{proof}




\end{document}